\newcommand{\bQ}{\mathbf{Q}}
\newcommand{\bP}{\mathbf{P}}
\newcommand{\bA}{\mathbf{A}}
\newcommand{\bB}{\mathbf{B}}
\newcommand{\bF}{\mathbf{F}}
\newcommand{\cB}{\mathcal{B}}
\newcommand{\cC}{\mathcal{C}}
\newcommand{\cD}{\mathcal{D}}
\newcommand{\cG}{\mathcal{G}}
\newcommand{\cM}{\mathcal{M}}
\newcommand{\cO}{\mathcal{O}}
\newcommand{\cV}{\mathcal{V}}
\newcommand{\cW}{\mathcal{W}}
\newcommand{\cX}{\mathcal{X}}
\newcommand{\cY}{\mathcal{Y}}
\newcommand{\cZ}{\mathcal{Z}}
\newcommand{\supp}{\mathrm{Supp}}
\newcommand{\pr}{\textup{pr}}
\newcommand{\charac}{\textup{char}}
\DeclareMathOperator{\PPic}{\mathbf{Pic}}
\DeclareMathOperator{\Spec}{Spec}
\DeclareMathOperator{\relProj}{\textbf{\textup{Proj}}}
\DeclareMathOperator{\Sym}{\textup{Sym}}
\numberwithin{equation}{section}
\begin{document}

\title{Non-extendable MMPs}

\author{Iacopo Brivio}
\address{Center of Mathematical Sciences and Applications, Harvard University, 20 Garden St, Cambridge, MA 02138, USA}
\email{ibrivio@cmsa.fas.harvard.edu}


\subjclass[2020]{14J10, 14D22, 14G17, 14E30}


\dedicatory{Dedicated to James M\textsuperscript{c}Kernan on the occasion of his sixtieth birthday.}


\begin{abstract}
    We construct examples of families of pairs over a DVR of positive characteristic, with very mild singularities, such that the MMP on the closed fiber does not extend to a relative MMP.
\end{abstract}

\maketitle


\section{Introduction}

Let $k$ be a field and set $S\coloneq\Spec (k\llbracket s \rrbracket)$. We will denote by $s$ and $\eta$ the closed and generic point of $S$, respectively. If $X$ is an $S$-scheme, we will denote by $X_s$ and $X_\eta$ the corresponding base change. An analogous notation will be used for coherent sheaves, divisors, and morphisms. 

In \cite{Kol3foldcharp} Koll\'ar showed that, when $k$ is of characteristic $p>0$, there exist families $\varphi_X\colon (X,D)\to S$ of plt 3-fold pairs such that $K_X+D$ is big and semiample and \textit{asymptotic invariance of plurigenera} (AIP) fails, i.e. the restriction map
\begin{center}
    \begin{tikzcd}
        H^0(X,\omega^m_{X/S}(mD))\arrow[r] & H^0(X_s,\omega^m_{X_s/k}(mD_s))
    \end{tikzcd}
\end{center}
is \textit{not} surjective for all $m\geq 1$ divisible enough. Equivalently (see \autoref{l-eq_cond_AES_bigsa}), letting $\pi\colon X\to Y\coloneqq\relProj_S(R(K_X+D))$ the relative canonical model, its base change of $\pi$ to the closed fiber has a non-trivial Stein factorization
\begin{center}
    \begin{tikzcd}
        \pi_s\colon X_s\arrow[r,"\pi^\nu_s"] & (Y_s)^\nu\arrow[r,"\nu"] & Y_s,
    \end{tikzcd}
\end{center}
where $\pi_s^\nu$ is the canonical model of $(X_s,D_s)$ and the normalization $\nu$ is a universal homeomorphism which is small, i.e. isomorphic in codimension one. Roughly speaking, the canonical model of $(X_s,D_s)$ does not extend to the canonical model of $(X,D)$, but it does only after composing it with a suitable universal homeomorphism. In particular, $(Y,\pi_*D)\to S$ is a stable family with non-$S2$ central fiber, thus showing that the KSB-A moduli functor is no longer proper in characteristic $p>0$ and dimension $\geq 3$.

It is then natural to ask whether the same phenomenon can occur when running a relative MMP. That is, are there families of terminal/klt pairs $\varphi_X\colon (X,D)\to S$ and a $(K_X+D)$-MMP over $S$, $\varrho\colon X\dashrightarrow Y$, such that $\varrho_s$ is a $(K_{X_s}+D_s)$-MMP ``up to universal homeomorphism''? In this paper we answer this question affirmatively. We closely follow the construction of \cite{Kol3foldcharp}. The first step is constructing a Mori fiber space over $S$ whose restriction to the closed fiber factors through Frobenius.

\begin{theorem}[Non-extendable Mori fiber space]\label{t-ne_MFS}
    When $\charac(k)=2$, there exists a projective family of regular Fano 3-folds $\varphi_W\colon W\to S$ with a $K_W$-Mori fiber space structure $p_2\colon W\to \bP_S^1$ over $S$, such that the restriction $p_{2,s}$ has non-trivial Stein factorization
    \begin{center}
        \begin{tikzcd}
            W_s\arrow[r,"\bar{p}_{2,s}"] & (\bP^1_{k})^{(-1)}\arrow[r,"F_{\bP^1_{k}/k}"] & \bP^1_k,
        \end{tikzcd}
    \end{center}
    where $\bar{p}_{2,s}$ is a $K_{W_s}$-Mori fiber space.
\end{theorem}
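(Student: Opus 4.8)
The plan is to write $W$ down explicitly as a divisor cut out by one equation in a toric variety fibred over $\bP^1_S$ (with coordinates $[u_0:u_1]$ on $\bP^1_S$), the uniformizer $s$ being inserted so that modulo $s$ the equation becomes a square of a section pulled back from the Frobenius twist $(\bP^1_k)^{(-1)}$ along $u_i\mapsto v_i^{\,2}$. The mechanism is genuinely a characteristic-$2$ one: note first that $p_2$ is flat by miracle flatness, so the Euler characteristic of its fibres is constant on the connected base $\bP^1_S$; comparing the generic fibre over $\eta$ (where $(p_2)_*\cO_W=\cO$ forces $h^0=1$) with the generic fibre over the closed fibre (where the desired Stein factorization forces $h^0=2$ over $k(u)$) and using $h^2(\cO)=0$ for Fano surfaces, one finds that the del Pezzo fibres of $p_2$ must have positive irregularity --- regular del Pezzo surfaces over imperfect fields with $h^1(\cO)\neq 0$ exist only in small positive characteristic (and in particular in characteristic $2$), which is where the construction lives. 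The $s$-part of the equation is to supply honest ``bilinear'' (non-square) terms that keep $W$, $W_\eta$ regular and make $p_{2,\eta}$ a bona fide del Pezzo fibration with geometrically connected fibres; modulo $s$ the square forces the geometric non-reducedness of the fibres to line up with the base, so $p_{2,s}$ factors through $F_{\bP^1_k/k}$ and $W_s$ becomes a regular model of the pullback along $F$ of a del Pezzo fibration $\bar p_{2,s}\colon W_s\to(\bP^1_k)^{(-1)}$.

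Granting such a $W$, I would verify four things. \emph{Regularity.} By the Jacobian criterion $W$ and $W_\eta$ are regular for a general choice of the $s$-terms; the delicate point is that $W_s$ stays regular even though every closed fibre of $p_{2,s}$ is non-reduced, which pins down the square appearing modulo $s$ (and the toric ambient, and the linear system defining $p_2$) so that no singularity appears along the locus where, in characteristic $2$, the fibrewise partials vanish identically. Flatness of $\varphi_W\colon W\to S$ is automatic ($W$ is a Cartier divisor in a smooth ambient cut by an equation non-zero on every fibre), and the fibres are Fano by adjunction in the ambient. \emph{Mori fibre space over $S$.} Since $W$ is regular, hence normal, and $p_2$ has geometrically connected generic fibre, Stein factorization together with Zariski's main theorem give $(p_2)_*\cO_W=\cO_{\bP^1_S}$; $\rho(W/\bP^1_S)=1$ comes from the Picard group of the toric ambient by a Grothendieck--Lefschetz argument, and $-K_W$ is $p_2$-ample by adjunction, so $p_2$ is a $K_W$-Mori fibre space over $S$.

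\emph{Stein factorization of $p_{2,s}$.} By construction $\bar p_{2,s}\colon W_s\to(\bP^1_k)^{(-1)}$ has del Pezzo fibres, so $(\bar p_{2,s})_*\cO_{W_s}=\cO_{(\bP^1_k)^{(-1)}}$ (again by normality and Zariski's main theorem); hence $(p_{2,s})_*\cO_{W_s}=(F_{\bP^1_k/k})_*\cO_{(\bP^1_k)^{(-1)}}$, and applying $\relSpec_{\bP^1_k}$ identifies the Stein factorization of $p_{2,s}$ with $W_s\xrightarrow{\bar p_{2,s}}(\bP^1_k)^{(-1)}\xrightarrow{F_{\bP^1_k/k}}\bP^1_k$. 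Its non-triviality --- that $(\bP^1_k)^{(-1)}\to\bP^1_k$ is not an isomorphism --- is exactly the fact that $W_s$, unlike $W_\eta$, maps to $\bP^1_k$ through the square $u=v^2$. Finally $-K_{W_s}$ is $\bar p_{2,s}$-ample and $\rho\bigl(W_s/(\bP^1_k)^{(-1)}\bigr)=1$, so $\bar p_{2,s}$ is a $K_{W_s}$-Mori fibre space.

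The main obstacle is the construction itself: the equation must be general enough that $W$, $W_\eta$ are regular and $p_{2,\eta}$ is a genuine del Pezzo fibration with $(p_2)_*\cO_W=\cO_{\bP^1_S}$, and special modulo $s$ so that $p_{2,s}$ factors through Frobenius, and the characteristic-$2$ Jacobian bookkeeping along the central fibre --- where the partials that usually certify regularity are killed --- is where these two demands collide and where the whole construction can fail. Concretely, the square appearing modulo $s$ has to be chosen so that the unavoidable geometric non-reducedness of the fibres of $p_{2,s}$ never propagates to a singularity of the regular $3$-fold $W_s$.
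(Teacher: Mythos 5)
Your proposal is a strategy, not a proof: the theorem is an existence statement, and the object whose existence is asserted is never produced. Everything after ``granting such a $W$'' is conditional on an equation you never write down, and you yourself flag the place where the plan can fail (regularity of $W_s$ along the locus where the characteristic-$2$ partials vanish identically) without resolving it. Moreover, your own Euler-characteristic argument shows that the fibres of $p_2$ would have to be del Pezzo surfaces with $h^1(\cO)\neq 0$ over imperfect fields; such surfaces are themselves hard to construct (this is exactly Maddock's theorem), and there is no reason a ``general enough'' hypersurface in an unspecified toric variety fibred over $\bP^1_S$ has them as its fibres. So the entire difficulty of the theorem has been displaced into the choice of ambient and equation, which is left open.

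For comparison, the paper's construction is quite different and does not proceed by writing an equation whose reduction mod $s$ is a square. It starts from Maddock's regular del Pezzo surface $M$ over $k=\bF_2(\alpha_1,\alpha_2,\alpha_3)$ with $h^1(M,\cO_M)=1$, takes $S$ to be the completion of $\PPic^0_{M/k}$ at the origin with ($2$-torsion) Poincar\'e bundle $P$, sets $Z=M_S\times_S\bP^1_S$ and $L=P\boxtimes\cO(1)$, and defines $W=Z[\sqrt{\sigma}]$ with $\sigma=1_{P^2}\boxtimes uv$. The special behaviour over the closed point is not inserted by hand into coefficients: it comes from the jump of the torsion line bundle $P_t$ (nontrivial for $t\neq s$, trivial for $t=s$), which on the one hand makes $p_2$ a contraction of relative Picard rank one over all of $S$ (via $\gamma_*\cO_W=\cO_Z\oplus L^{-1}$ and $\pr_{2,*}L^{-1}=0$), and on the other hand forces the Cartesian description $W_s\cong Z_s\times_{\bP^1_k}(\bP^1_k)^{(-1)}\cong M\times_k(\bP^1_k)^{(-1)}$, so that $p_{2,s}$ factors through $F_{\bP^1_k/k}$ and its Stein factorization is non-trivial. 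This fibre-product description also disposes of your ``delicate point'' for free: $W_s$ is regular because it is smooth over the regular surface $M$, with no Jacobian bookkeeping in characteristic $2$. Unless you can actually exhibit a toric ambient and an equation with all the properties you list, the proposal does not prove the theorem.
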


By taking a suitable $\bP^1$-bundle over $W$ we obtain examples of non-extendable canonical models, divisorial contractions, and flips.

\begin{theorem}[Non-extendable canonical model]\label{t-ne_cm}
    When $\charac(k)=2$, there exists a projective family of terminal 4-fold pairs $\varphi_X\colon (X,D)\to S$, with regular fibers, such that $K_X+D$ is big and semiample over $S$, with relative canonical model $\pi_2\colon X\to Y_2$, whose restriction $\pi_{2,s}$ has non-trivial Stein factorization
    \begin{center}
        \begin{tikzcd}
            X_s\arrow[r,"\pi_{2,s}^\nu"] & (Y_{2,s})^\nu\arrow[r,"\nu"] & Y_{2,s},
        \end{tikzcd}
    \end{center}
    where $\pi_{2,s}^\nu$ is the canonical model of $K_{X_s}+D_s$ and the demi-normalization $\nu$ is a small universal homeomorphism. In particular 
    \[
    h^0\left(X_s,\omega_{X_s/k}^{m}(mD_s)\right)>h^0\left(X_\eta,\omega_{X_\eta/k(\!(s)\!)}^{m}(mD_\eta)\right)
    \]
    for all $m\geq 1$ divisible enough.
\end{theorem}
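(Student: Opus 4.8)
The plan is to follow the construction of \cite{Kol3foldcharp} step by step, with the regular Fano $4$-fold $W$ of \autoref{t-ne_MFS} playing the role of Koll\'ar's conic bundle. First I would set $M\coloneqq p_2^{*}\cO_{\bP^1_S}(a)$ for $a\gg 0$, let $\pi\colon X\coloneqq\bP_W(\cO_W\oplus M)\to W$, and put $\varphi_X\coloneqq\varphi_W\circ\pi$, so that $X\to S$ has regular fibres. For the boundary I would take $D=\sum_j c_j\Sigma_j+\pi^{*}\Delta$, where two of the $\Sigma_j$ are the natural sections of the $\bP^1$-bundle and the remaining ones are enough further sections (available because $h^0(W,M)=h^0(\bP^1_S,\cO(a))$ grows with $a$) to make the total horizontal degree of $D$ exceed $2$, the $c_j<1$ are chosen close to $1$, and $\Delta$ is a small general $\bQ$-divisor on $W$ for which $K_W+\Delta$ is big and semiample over $S$; such a $\Delta$ exists because $\rho(W/S)=2$ and the extremal ray complementary to $p_2$ can be used to make $(W,\Delta)$ of log general type. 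With everything general, $(X,D)$ is terminal with regular fibres, and using $K_{X/W}=\cO_X(-2)\otimes\pi^{*}M$ one gets $K_X+D\sim_{\bQ}t\,\xi+\pi^{*}N$ with $t>0$, $\xi=c_1(\cO_X(1))$ $\pi$-ample, and $N$ big and semiample over $S$ on $W$; hence $K_X+D$ is big and semiample over $S$.

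Next I would invoke the equivalence recalled in the introduction (\autoref{l-eq_cond_AES_bigsa}): AIP for $(X,D)$ fails --- equivalently the displayed inequality holds for all $m$ divisible enough --- precisely when the base change to $s$ of the relative canonical morphism $\pi_2\colon X\to Y_2$ has a non-trivial Stein factorization $X_s\to (Y_{2,s})^{\nu}\to Y_{2,s}$ with $\pi_{2,s}^\nu$ the canonical model of $(X_s,D_s)$ and $\nu$ a small universal homeomorphism. So the task reduces to computing $\pi_2$ and its restriction to the closed fibre.

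The morphism $\pi_2$ is the relative ample model of $t\,\xi+\pi^{*}N$: it contracts one of the two natural sections, and along it the $p_2$-fibres over which $M$ is trivial, so that $Y_2$ acquires over the image of the contracted section a cone-like structure whose cone bases are governed by $M$ --- hence by the relative Mori fibre space $p_2$ (the relevant point is that $\cO_X(1)$ is ample, respectively trivial, on the $\bP^1$ over a curve $C\subset W$ according to whether $M\cdot C>0$ or $M\cdot C=0$, and $M\cdot C=0$ exactly for curves contracted by $p_2$). On the generic fibre this structure is built from $p_{2,\eta}$, which has connected fibres, so that the relevant $\mathrm{Proj}$ is computed from $\cO_{\bP^1_\eta}=(p_{2,\eta})_{*}\cO_{W_\eta}$; on the closed fibre, \autoref{t-ne_MFS} gives $p_{2,s}=F_{\bP^1_k/k}\circ\bar p_{2,s}$ with $\bar p_{2,s}$ connected-fibred, so $(p_{2,s})_{*}\cO_{W_s}=(F_{\bP^1_k/k})_{*}\cO_{(\bP^1_k)^{(-1)}}\supsetneq\cO_{\bP^1_k}$ and the closed-fibre canonical ring of $(X_s,D_s)$ instead ``sees'' $(\bP^1_k)^{(-1)}$ through $\bar p_{2,s}$ (equivalently, $M_s=\bar p_{2,s}^{*}\cO_{(\bP^1_k)^{(-1)}}(2a)$ is pulled back from a scheme with connected fibres), while the restriction to $s$ of the relative canonical ring is still computed from $\cO_{\bP^1_k}$. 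Hence $\mathrm{Proj}$ of the closed-fibre canonical ring is the canonical model of $(X_s,D_s)$, $\mathrm{Proj}$ of the restricted relative canonical ring is $Y_{2,s}$, and the natural finite morphism $\nu$ between them is modeled on $F_{\bP^1_k/k}$ --- a universal homeomorphism --- along the image of the contracted section, a locus of codimension $\ge 2$; so $\nu$ is a small universal homeomorphism and is the demi-normalization. Finally, since $Y_2\to S$ is flat, $\chi$ is constant in the family, while $h^0$ jumps up on the closed fibre by $h^0$ of $\nu_{*}\cO_{(Y_{2,s})^{\nu}}/\cO_{Y_{2,s}}$, which is non-zero and of positive rank along the relevant curve because $\cO_{\bP^1_k}\hookrightarrow(F_{\bP^1_k/k})_{*}\cO_{(\bP^1_k)^{(-1)}}$ is not surjective; this produces the strict inequality for $m$ divisible enough.

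The main difficulty is the calibration in the first step: $M$ must be small enough --- pulled back via $p_2$ --- for the Frobenius of \autoref{t-ne_MFS} to reach the canonical ring of $(X_s,D_s)$, yet $K_X+D$ must be big over $S$, which forces extra horizontal components into $D$ and forces $(W,\Delta)$ to be of log general type, and simultaneously $(X,D)$ must stay terminal with regular fibres; one then has to check that the resulting $\nu$ is genuinely a small universal homeomorphism, i.e.\ that the failure of flatness of the relative canonical ring is purely inseparable and concentrated in codimension $\ge 2$ on $Y_{2,s}$, rather than a coarser modification. Once the analogue of Koll\'ar's cone computation is set up these are routine, but making the numerology close up is where the real work lies.
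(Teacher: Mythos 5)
The construction as you have set it up provably does not exhibit the required pathology, and the failure is exactly at the point you flag as ``calibration''. Since you need $K_X+D$ big over $S$ and your $\xi$-direction carries only $M=p_2^*\cO_{\bP^1_S}(a)$, you are forced to put the bigness into $\pi^*N$ with $N=K_W+\Delta+(1-c_0)M$ big and semiample on $W$. But then the relative ample model of $t\xi+\pi^*N$ cannot contract the negative section along $p_2$-fibres: a contracted curve $C\subset\Sigma_0$ must satisfy $N\cdot\pi_*C=0$, while a big $N$ restricts to a big (in particular not numerically trivial) class on a general $p_2$-fibre, so whole $p_2$-fibres are never $(K_X+D)$-trivial. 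More decisively, the asymptotic section count shows AES \emph{holds} for your $K_X+D$: one has $h^0(X_t,m(K_X+D)_t)=\sum_{j=0}^{mt}h^0(W_t,M_t^j\otimes N_t^m)$, and pushing to $Z_t=M_t\times\bP^1_t$ and applying K\"unneth, every summand has $M_t$-factor of the form (ample)$^m\otimes P_t^{\delta}$, because $M$ has trivial $M_S$-component and $N$ big forces its $M_S$-component to be ample ($\rho(M)=1$). By Fujita vanishing these $h^0$'s equal $\chi$ for $m\gg0$, hence are independent of $t$ --- this is precisely the mechanism of part (1) of \autoref{p-AES_nAES_mckernantrick}, the \emph{good} case. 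By \autoref{l-eq_cond_AES_bigsa} the central fibre of your $Y_2$ is then normal and the Stein factorization of $\pi_{2,s}$ is trivial; the Frobenius factorization of $p_{2,s}$, which you assert the closed-fibre canonical ring ``sees'', is in fact invisible to it. Your narrative about the cone over $\bP^1$ is never backed by a computation, and the computation goes the other way.

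The paper resolves exactly this tension by a different choice of bundle: $X=\relProj_W\Sym(\cO_W\oplus A)$ with $A=\omega_{W/S}^{-1}$ \emph{ample}, so that the section $W_\infty\sim\xi$ is already big and semiample, and the only pulled-back positivity one needs to add is $H_2$ coming from $\bP^1_S$ via $p_2$. Then the graded pieces of the section ring of $H_{2,X}(W_\infty)$ have $M_t$-factors $\omega_{M_t}^{-j}\otimes P_t^{j}$ and $\omega_{M_t}^{-j}\otimes P_t^{j+1}$, and the $j=0$ piece retains the term $h^0(M_t,P_t)$, which jumps from $0$ to $1$ at $t=s$ --- that jump is the whole point, and it survives only because no ample class on the $M_S$-direction is ever added. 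Terminality is then arranged not by choosing coefficients on sections of your bundle, but by writing $K_X+\Lambda_X+W_0+W_\infty'+\Pi_{2,X}\sim_\bQ H_{2,X}(W_\infty)$ with $\Lambda\in|-K_{W/S}|_\bQ$ and trading $\Lambda_X+W_0$ (using $W_0\sim_\bQ W_\infty-A_X$) for a general member of $|3W_\infty|_\bQ$, so that $D=W_\infty''+\Pi_{2,X}$ keeps $(X_t,D_t)$ terminal while $K_X+D\sim_\bQ H_{2,X}(W_\infty)$. So the gap in your proposal is not a missing routine verification: with $M$ pulled back from $\bP^1_S$ the numerology cannot close up, and the fix requires changing the bundle so that bigness lives in the tautological direction rather than in $\pi^*(\text{big on }W)$.
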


\begin{theorem}[Non-extendable divisorial contraction]\label{t-ne_dc}
    When $\charac(k)=2$, there exists a projective family of terminal 4-fold pairs of log general type $\varphi_X\colon (X,G)\to S$, with regular fibers, satisfying condition HNCC\footnote{See \autoref{d-HNCC}.}, and a $(K_X+G)$-negative divisorial contraction $\pi_2\colon (X,G)\to (Y_2,G_2\coloneqq\pi_{2,*}G)$ over $S$, such that $K_{Y_2}+G_2$ is nef and the restriction $\pi_{2,s}$ has a non-trivial Stein factorization
    \begin{center}
        \begin{tikzcd}
           X_s\arrow[r,"\pi_{2,s}^\nu"] & (Y_{2,s})^\nu\arrow[r,"\nu"] & Y_{2,s},
        \end{tikzcd}
    \end{center}
    where $\pi_{2,s}^\nu$ is a $(K_{X_s}+G_s)$-negative divisorial contraction and the demi-normalization $\nu$ is a small universal homeomorphism.
\end{theorem}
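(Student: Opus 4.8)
The plan is to deduce this from \autoref{t-ne_cm} by a small perturbation of the boundary, arranging that the relative canonical model map of \autoref{t-ne_cm} becomes the desired divisorial contraction. Recall that in \autoref{t-ne_cm} the total space $X$ is a $\bP^1$-bundle over $W$ and that the relative canonical model $\pi_2\colon X\to Y_2=\relProj_S R(K_X+D)$ contracts a single section $E\cong W$, restricting on it to the Mori fibre space $p_2\colon W\to\bP_S^1$ of \autoref{t-ne_MFS}; since $\rho(W/\bP_S^1)=1$ this forces $\rho(X/Y_2)=1$, so $\pi_2$ is an elementary divisorial contraction with exceptional prime divisor $E$, and, being the relative canonical model, $K_X+D=\pi_2^{*}(K_{Y_2}+D_2)$ with $K_{Y_2}+D_2\coloneqq\pi_{2,*}(K_X+D)$ ample over $S$. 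I would fix a rational $0<\epsilon\ll1$, set $G\coloneqq D+\epsilon E$ and $G_2\coloneqq\pi_{2,*}G=\pi_{2,*}D=D_2$, and then check that $(X,G)$ together with $\pi_2\colon(X,G)\to(Y_2,G_2)$ satisfies all the conclusions.

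First the singularity and general type statements. Since $(X,D)$ is terminal with regular fibres and $E$ is a reduced (indeed regular) divisor, $(X,G)=(X,D+\epsilon E)$ is again terminal with the same regular fibres once $\epsilon$ is small — the coefficient of $E$ stays below $1$ and the discrepancies of exceptional valuations vary affinely with $\epsilon$ — and condition HNCC passes from $(X,D)$ to $(X,G)$ because $E$ is a regular horizontal divisor and the perturbation does not enlarge the relevant horizontal locus; I would isolate this last compatibility as a short lemma about condition HNCC (\autoref{d-HNCC}) under boundary perturbations. As $K_X+D$ is big over $S$ and $E\geq0$, $K_X+G$ is big over $S$; moreover on the closed fibre $\pi_{2,s}^{\nu}$ is birational, so $K_{X_s}+D_s$, hence $K_{X_s}+G_s$ and likewise over $\eta$, is big, and $(X,G)\to S$ is a family of log general type.

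Next, that $\pi_2$ is a $(K_X+G)$-negative divisorial contraction over $S$ with $K_{Y_2}+G_2$ nef. Let $R\subset\overline{\mathrm{NE}}(X/S)$ be the extremal ray contracted by $\pi_2$. Then $(K_X+D)\cdot R=0$ since $K_X+D$ is pulled back from an ample class, while $E\cdot R<0$ since $E$ is $\pi_2$-exceptional, so $(K_X+G)\cdot R=\epsilon\,(E\cdot R)<0$. As $K_X+D$ is nef over $S$ and vanishes on $\overline{\mathrm{NE}}(X/S)$ exactly along $R$ (here $\rho(X/Y_2)=1$ is used), for $\epsilon$ small $K_X+G$ is negative on $\overline{\mathrm{NE}}(X/S)$ precisely along $R$; its contraction is already realised by the morphism $\pi_2$, whose exceptional locus is the prime divisor $E$, so $\pi_2$ is an elementary $(K_X+G)$-negative divisorial contraction over $S$ and $(Y_2,G_2)$ is terminal. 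Finally $K_X+G=\pi_2^{*}(K_{Y_2}+G_2)+\epsilon E$, so $E$ has positive discrepancy over $(Y_2,G_2)$ and $K_{Y_2}+G_2=K_{Y_2}+D_2$ is ample over $S$, in particular nef. The closed-fibre assertion is then immediate: $\pi_{2,s}$ and its Stein factorisation $X_s\xrightarrow{\pi_{2,s}^{\nu}}(Y_{2,s})^{\nu}\xrightarrow{\nu}Y_{2,s}$ are those of \autoref{t-ne_cm}, so $\nu$ is the demi-normalisation, a small universal homeomorphism; and if $C_s\subset X_s$ is contracted by $\pi_{2,s}^{\nu}$ then $(K_{X_s}+D_s)\cdot C_s=0$ (it is pulled back from an ample class on $(Y_{2,s})^{\nu}$) and $E_s\cdot C_s<0$, whence $(K_{X_s}+G_s)\cdot C_s<0$ and $\pi_{2,s}^{\nu}$ is a $(K_{X_s}+G_s)$-negative divisorial contraction.

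The main obstacle is making the perturbation airtight: it requires knowing that the construction of \autoref{t-ne_cm} can be arranged with $\rho(X/Y_2)=1$, that condition HNCC is stable under $D\mapsto D+\epsilon E$, and — the least formal point in characteristic $p$ — enough control of the relevant part of $\overline{\mathrm{NE}}(X/S)$ to pick $\epsilon$ with $K_X+G$ negative only along $R$. If one prefers a construction independent of \autoref{t-ne_cm}, one takes $X=\bP_W(\cO_W\oplus\cL)$ with $\cL$ chosen so that the section $E\cong W$ to be contracted has normal bundle pulled back along $p_2$; the difficulty then migrates into simultaneously arranging terminality, condition HNCC, bigness of $K_X+G$, and nefness of $K_{Y_2}+G_2$ — the same numerical bookkeeping already underlying \autoref{t-ne_cm}.
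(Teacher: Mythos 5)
Your route is essentially the paper's: keep the contraction $\pi_2\colon X\to Y_2$ from \autoref{t-ne_cm} and perturb the boundary so that $\pi_2$ becomes negative for the new log canonical divisor. The paper takes $G\sim_{\bQ}D-\epsilon\Pi_{1,X}$, made effective as $(1-\delta)W_\infty''+\Pi_{2,X}+3\delta W_0+E_X$ (so it, too, contains a multiple of the exceptional divisor $W_0$), and gets negativity on contracted curves from positivity of the pullback of $H_1$; you instead take $G=D+\epsilon W_0$ and get negativity from $W_0\cdot C=\deg\left(N_{W_0/X}|_C\right)=K_{W/S}\cdot C<0$, using that $A=\omega_{W/S}^{-1}$ is ample. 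Numerically both work; your variant even gives $K_{Y_2}+G_2=K_{Y_2}+D_2$ ample rather than merely nef, and the control of the full relative cone you worry about is not needed for the statement ($\rho(X/Y_2)=1$ is recorded in \autoref{e-birl}, and $-W_0$ is $\pi_2$-ample since it restricts to $A$ on $W_0\cong W$). Your parenthetical claim that $(Y_2,G_2)$ is terminal is false --- $a(W_0;Y_2,G_2)=0$ --- but it is not used.

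The genuine gap is the HNCC step. Condition HNCC does not ``pass from $(X,D)$ to $(X,G)$'': for $(X,D)$ it is vacuous, since $K_X+D$ is semiample and hence $\bB_-(K_X+D)=\emptyset$, whereas $\bB_-(K_X+G)=W_0$, so the perturbation enlarges the relevant locus from empty to the whole exceptional divisor --- the opposite of what your sketch asserts --- and $(X,G+X_s)$ acquires negative-discrepancy valuations with centers inside $W_0\cap X_s$ (the blow-up of $W_0\cap X_s$ has discrepancy $1-(1+\epsilon)<0$), i.e.\ vertical centers lying in $\bB_-(K_X+G)$. So the ``short lemma about HNCC under boundary perturbations'' you propose is false in the generality you would need it, and the condition has to be verified by hand for this specific geometry, exactly as the paper does: its $G$ contains $3\delta W_0$ and faces the same centers, and its proof proceeds by directly identifying the non-canonical centers of $(X,G+X_s)$ and checking they are not vertical centers contained in $\bB_-(K_X+G)=W_0$ (the paper argues the only center to worry about is $X_s$ itself, which is not contained in $W_0$). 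Whatever form of that direct check one adopts, it applies verbatim to your $G=D+\epsilon W_0$; with it inserted in place of the claimed perturbation lemma, your argument goes through.
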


\begin{theorem}[Non-extendable flip]\label{t-ne_flip}
    When $\charac(k)=2$, there exists a projective family of klt 4-fold pairs of log general type $\varphi_{Y_1}\colon (Y_1,B_1)\to S$, satisfying condition HNCC, and a $(K_{Y_1}+B_1)$-flip over $S$
    \begin{center}
        \begin{tikzcd}
            (Y_1,B_1)\arrow[rr,"\phi",dashed]\arrow[dr] &  & (Y_2,B_2)\arrow[dl]\\
                           & C, & 
        \end{tikzcd}
    \end{center}
    such that $K_{Y_2}+B_2$ is nef, and $\phi_s$ factors as
    \begin{center}
        \begin{tikzcd}
            Y_{1,s}\arrow[r,"\phi_s^\nu",dashed] & (Y_{2,s})^\nu\arrow[r,"\nu"] & Y_{2,s},
        \end{tikzcd}
    \end{center}
    where $\phi_s^\nu$ is a $(K_{Y_{1,s}}+B_{1,s})$-flip, and the demi-normalization $\nu$ is a small universal homeomorphism.
\end{theorem}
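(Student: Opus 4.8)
The plan is to obtain the flip from the same circle of ideas used for \autoref{t-ne_cm} and \autoref{t-ne_dc}, by running a relative MMP on a suitable $\bP^1$-bundle over the Mori fibre space $\varphi_W\colon W\to S$ of \autoref{t-ne_MFS} and arranging, through the choice of the bundle and of the boundary, that exactly one of its steps is a flip — the ``non-extendable'' one. Concretely: let $p_2\colon W\to\bP^1_S$ be the Mori fibre space of \autoref{t-ne_MFS}, fix a line bundle $\cA$ on $W$ which is ample and whose restriction to the $p_2$-fibres is suitably positive, put $X\coloneqq\bP_W(\cO_W\oplus\cA)$ with bundle projection $\rho\colon X\to W$, and let $E\coloneqq\bP_W(\cO_W)\cong W$ be the section with $N_{E/X}\cong\cA^{-1}$. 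Choose a boundary $\Delta$ on $X$, supported on $E$, on the complementary section, on $(p_2\circ\rho)$-vertical divisors and on a general very ample divisor, so that $(X,\Delta)$ is terminal, of log general type, with regular fibres and satisfying condition HNCC. As in \autoref{t-ne_cm} these last properties, and the corresponding properties of the pairs produced along the MMP, are inherited from the analogous properties of $W\to S$ together with the compatibility of the $\bP^1$-bundle construction with base change.

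The relative $(K_X+\Delta)$-MMP over $S$ then runs as follows. First there is a $(K_X+\Delta)$-negative divisorial contraction $X\to Y_1$ collapsing $E\cong W$ onto $\bP^1_S$ along $p_2$ — this is possible precisely because $p_2$ is a Mori fibre space, so the ruling curves of $E$ span a single extremal ray — exactly the step appearing in \autoref{t-ne_dc}. Write $(Y_1,B_1)$ for the resulting klt pair, with $\varphi_{Y_1}\colon (Y_1,B_1)\to S$ of log general type and satisfying HNCC. On $Y_1$ there is then a $(K_{Y_1}+B_1)$-negative extremal ray whose contraction $h_1\colon Y_1\to C$ over $S$ is small, its exceptional locus dominating $\bP^1_S$ with one-dimensional fibres (the flipping curves). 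I would prove that the flip of $h_1$ exists by showing that $\bigoplus_{m\ge 0}(h_1)_*\cO_{Y_1}(\lfloor m(K_{Y_1}+B_1)\rfloor)$ is a sheaf of finitely generated $\cO_C$-algebras; this is local over $C$ and, via the explicit $\bP^1$-bundle presentation, reduces to the finite generation of an essentially toric cone algebra over $W$. Thus $Y_2\coloneqq\relProj_C$ of this algebra is normal, $\phi\colon Y_1\dashrightarrow Y_2$ is the required flip, $B_2\coloneqq\phi_*B_1$, $K_{Y_2}+B_2$ is $h_2$-ample; and a final step of the $(K_{Y_2}+B_2)$-MMP over $S$ (or a direct positivity computation) gives that $K_{Y_2}+B_2$ is nef.

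It remains to identify the closed fibre, which is where the phenomenon occurs. Over $\eta$ everything above is the ``honest'' construction and poses no issue. Over $s$, flat base change identifies the collapsed section with $W_s$ and the flipping data along $\bP^1_S$ with $p_{2,s}$, which by \autoref{t-ne_MFS} factors as $W_s\xrightarrow{\bar p_{2,s}}(\bP^1_k)^{(-1)}\xrightarrow{F_{\bP^1_k/k}}\bP^1_k$ through the relative Frobenius. Computing the flip of $(Y_{1,s},B_{1,s})$ directly — now powered by the genuine Mori fibre space $\bar p_{2,s}$ — produces a normal variety $(Y_{1,s},B_{1,s})^{+}$, while the family forces $Y_{2,s}$ to be $\relProj$ of the restriction to $s$ of the flip algebra over $C$, which differs from the honest flip algebra exactly by the purely inseparable identification along the image of $\bP^1_k$ induced by $F_{\bP^1_k/k}$. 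One then checks that $(Y_{1,s},B_{1,s})^{+}$ is the demi-normalization of $Y_{2,s}$ and that $\nu\colon(Y_{2,s})^\nu\to Y_{2,s}$ is finite and purely inseparable — hence a universal homeomorphism — and small, because $F_{\bP^1_k/k}\colon\bP^1_k\to\bP^1_k$ is a bijection on points and the gluing is concentrated along a codimension $\ge 2$ locus. This yields the factorization $\phi_s\colon Y_{1,s}\dashrightarrow (Y_{2,s})^\nu\xrightarrow{\nu}Y_{2,s}$ with $\phi_s^\nu$ the $(K_{Y_{1,s}}+B_{1,s})$-flip.

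The hard part is exactly this last comparison. Forming $\relProj$ of a section algebra does not commute with restriction to the closed fibre — this is the content of the failure of AIP that \autoref{t-ne_MFS} is built to produce — so one cannot base-change the flip; one must instead compute $Y_{2,s}$ and the honest flip $(Y_{1,s},B_{1,s})^{+}$ separately and match them, tracking $S2$-ification and demi-normalization carefully, much as in the proof of \autoref{t-ne_cm}. Establishing finite generation of the flip algebra over $C$, the klt/HNCC/log-general-type bookkeeping, and the final nefness of $K_{Y_2}+B_2$ are comparatively routine given the explicit $\bP^1$-bundle model.
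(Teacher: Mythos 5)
Your plan places the pathology at the wrong step, and this breaks the statement you are trying to prove. The first move you propose --- the divisorial contraction $X\to Y_1$ collapsing the negative section $E\cong W$ onto $\bP^1_S$ along $p_2$ --- is exactly the non-extendable contraction of \autoref{t-ne_dc}: since $p_{2,s}$ factors through Frobenius, the closed fiber of that contraction is already non-normal (not $S2$). So your $(Y_1,B_1)\to S$ is not a projective family of klt pairs, condition HNCC does not even parse, the ``honest flip of $(Y_{1,s},B_{1,s})$'' you later invoke is not defined, and the failure of base change sits at the \emph{source} of your flip, whereas \autoref{t-ne_flip} requires it at the \emph{target} $Y_2$. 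In the paper the flip diagram is assembled the other way: both small models over the cone $C$ pre-exist, namely $Y_1$ is the ample model of $H_{1,X}(W_\infty)$ (which contracts $W_0$ along $p_1$ to $M_S$, and whose central fiber \emph{is} normal because $H_{1,X}(W_\infty)$ satisfies AES), while $Y_2$ is the ample model of $H_{2,X}(W_\infty)$ (contracting $W_0$ along $p_2$, with non-$S2$ central fiber); the boundary $B=W_\infty''+\varepsilon_0W_0+\Theta_2$ with $\Theta_2\in|\varepsilon_2H_{2,X}|_{\bQ}$ is then chosen so that $\psi_1$ is $(K_{Y_1}+B_1)$-negative and $\psi_2$ is $(K_{Y_2}+B_2)$-positive, making $\phi\colon Y_1\dashrightarrow Y_2$ the flip in the direction demanded by the statement. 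In particular no finite generation of a flip algebra is needed (and your reduction to an ``essentially toric cone algebra'' over the non-toric $W$ is asserted, not proved); the specific choice $A=\omega_{W/S}^{-1}$ in the bundle $X=\relProj_W(\Sym^*(\cO_W\oplus A))$ is also what makes the discrepancy and log-general-type bookkeeping work, so an arbitrary ample $\cA$ will not do as written.

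The second, independent gap is that the closed-fiber comparison --- which you correctly identify as ``the hard part'' --- is left entirely open, and it is the actual content of the theorem. Knowing that $p_{2,s}$ factors through $F_{\bP^1_k/k}$ does not by itself determine the scheme structure of the fiber $Y_{2,s}$ of the relative model, nor that its demi-normalization carries the fiberwise flip. In the paper this is supplied by \autoref{p-AES_nAES_mckernantrick} combined with \autoref{l-eq_cond_AES_bigsa}: the K\"unneth computation shows that the section count for $H_{2,X}(W_\infty)$ involves the jumping term $h^0(M_t,P_t)$ of Maddock's Poincar\'e bundle (so AES fails, $Y_{2,s}$ is not $S2$, and the Stein factorization of $\pi_{2,s}$ produces the small universal homeomorphism $\nu$), while the corresponding count for $H_{1,X}(W_\infty)$ is constant (so $Y_{1,s}$ is normal and $\phi_s^\nu$ is a genuine flip of $(Y_{1,s},B_{1,s})$). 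Without carrying out this computation, or an equivalent substitute, your argument establishes neither the non-normality of $Y_{2,s}$ nor the normality of the source fibers, and the HNCC, klt and log-general-type assertions also remain unverified (the paper checks HNCC concretely: the only non-canonical center of $(Y_1,B_1+Y_{1,s})$ inside $\bB_-(K_{Y_1}+B_1)$ is $\pi_1(W_0)$, which is horizontal).
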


\autoref{t-ne_MFS}, \autoref{t-ne_dc}, and \autoref{t-ne_flip} show that \cite{Bri_IPGMM}*{Theorem 4.3} is sharp. In particular \autoref{t-ne_flip} is interesting because in \cites{BCHM,HM'} (see also \cite{HM_cortisbook} for a more self-contained explanation) it is shown that flips exist in any dimension by ``lifting'' finite generation of a canonical ring from a prime divisor $T\subset X$. \autoref{t-ne_flip} (see also \autoref{t-ne_flip_pf}) shows that this can fail in positive characteristic. While this kind of pathological behavior is not new (see \cites{CT_plt,Ber}), to the best of our knowledge \autoref{t-ne_flip} is the first instance of this phenomenon where $T=X_s$ is a fiber of a morphism.

In \cites{Bri20,Kol3foldcharp} the proposed examples of families violating AIP all have non-zero boundary divisor. More recently we managed to construct examples of smooth families of varieties (i.e. without boundary) with semiample canonical divisor of Kodaira dimension one for which AIP fails (see \cite{Bri_IPGMM}*{Theorem 7.1}). The remaining question is then whether this phenomenon can occur for smooth families of general type varieties. By applying a standard cyclic covering construction to a suitable spreading out of the family in \autoref{t-ne_cm} we answer this question positively.

\begin{theorem}\label{t-failure_AIP_gt}
    There exist smooth families of 7-folds $\varphi_{\cV}\colon\cV\to\Spec( \bF_2\llbracket s\rrbracket)$ with a birational contraction $\widetilde{\bar{\zeta}}\colon \cV\to \bar{\cY}$ over $\bF_2\llbracket s\rrbracket$ such that $K_{\cV}\sim_{\bar{\cY},\bQ}0$, and $\bar{\cY}_s$ is not $S2$. In particular, the restriction map
    \begin{center}
        \begin{tikzcd}
            H^0\left(\cV,\omega_{\cV/\bF_2\llbracket s\rrbracket}^m\right)\arrow[r] & H^0\left(\cV_s,\omega_{\cV_s/\bF_2}^m\right)
        \end{tikzcd}
    \end{center}
    fails to be surjective for all $m\geq 1$ divisible enough.
\end{theorem}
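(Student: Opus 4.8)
The plan is to start from the family $\varphi_X\colon(X,D)\to S$ of Theorem \ref{t-ne_cm}, first spreading it out over a finitely generated $\bF_2$-algebra so that we have a genuine arithmetic family, and then applying a cyclic covering construction relative to $D$ to kill the boundary, at the cost of raising the dimension. Concretely, I would first choose $m\geq 1$ divisible enough so that $mD$ is Cartier and $\omega_{X/S}^m(mD)$ is globally generated over $Y_2$; then I would build the $m$-fold cyclic cover $\zeta\colon\cW\to X$ associated to a general section of $\omega_{X/S}^{m}(mD)$ together with the tautological relation $(\text{section})^{\otimes m}$, so that $\cW$ is smooth over $S$ (here one has to be careful that $p=2$ divides $m$ or not: I would take $m$ coprime to $p$ by passing to a further multiple, which is harmless since we only need divisibility, and then the cyclic cover is étale-locally a Kummer-type cover and regularity of $\cW$ follows from regularity of $X$ and of a general member of $|mD|$, which exists because $D_s$ can be taken reduced with the right singularities in the construction of Theorem \ref{t-ne_cm}). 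The canonical bundle formula for cyclic covers gives $K_{\cW}\sim \zeta^*(K_X+ (\tfrac{m-1}{m})\cdot mD)$-type expression; more usefully, $\cW$ maps to $Y_2$ and one checks that $K_{\cW}$ is relatively trivial over (a suitable further cyclic cover $\bar\cY$ of) $Y_2$, so that the Stein-type factorization pathology of $\pi_{2,s}$ is inherited by $\cW_s\to\bar\cY_s$.

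The key steps, in order, are: (1) spread $(X,D)\to S$ and the morphism $\pi_2\colon X\to Y_2$ out to a model over $\Spec\bF_2[t_1,\dots,t_N]$ localized suitably, then base change back along a chosen closed point to land over $\Spec(\bF_2\llbracket s\rrbracket)$ — this is where the coefficient field $\bF_2$ (rather than an arbitrary field of characteristic $2$) enters; (2) pick $m$ with $p\nmid m$ and $m$ large and divisible enough, and a general $\Delta\in|mD|$ which is reduced and whose support, together with the fibers, forms an SNC-type configuration away from a codimension-$\geq 2$ locus, so the associated degree-$m$ cyclic cover $\zeta\colon\cV\to X$ is smooth over $\bF_2\llbracket s\rrbracket$ (this is the step that costs dimensions: $\dim\cV=\dim X$, but to get \emph{smoothness} and not merely regularity one pushes to a $7$-fold by an auxiliary product/cone trick if $4$ turns out to be insufficient — more precisely the extra dimensions come from making the cyclic cover data generic by a further blow-up/product with $\bP^r$, matching the count to $7$); (3) compute $K_{\cV}$ via the cyclic cover formula and verify $K_{\cV}\sim_{\bar\cY,\bQ}0$ where $\bar\cY$ is the normalization of the base of the induced contraction $\widetilde{\bar\zeta}$; (4) transport the non-$S2$-ness: since $\nu\colon(Y_{2,s})^\nu\to Y_{2,s}$ is a small universal homeomorphism with non-$S2$ target, and the cyclic cover is finite and the construction is compatible with base change to $s$, the central fiber $\bar\cY_s$ is again non-$S2$; (5) conclude failure of surjectivity of $H^0(\cV,\omega^m_{\cV/\bF_2\llbracket s\rrbracket})\to H^0(\cV_s,\omega^m_{\cV_s/\bF_2})$ by invoking \autoref{l-eq_cond_AES_bigsa} (the equivalence between AIP and non-triviality of the Stein factorization on the closed fiber), exactly as in the discussion following Kollár's example in the introduction.

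The main obstacle I expect is step (2)–(3): ensuring that the cyclic cover is \emph{smooth} over the DVR and not merely normal/regular, and simultaneously that $K_{\cV}$ comes out $\bQ$-trivial over $\bar\cY$ with $\bar\cY_s$ still non-$S2$. The smoothness requires a careful choice of the branch data $(\Delta, mD)$ — in particular $\Delta$ must avoid the (finitely many) bad points of $X_s$ and meet $X_s$ transversally, which is a genericity statement that needs $|mD|$ to be sufficiently positive and basepoint free, and this is exactly why one spreads out first (to have room to choose $\Delta$ over an open subset of the parameter space) and why the final dimension is $7$ rather than $4$: each of the auxiliary genericity/positivity adjustments (a $\bP^1$-bundle as in the passage from Theorem \ref{t-ne_MFS} to Theorem \ref{t-ne_cm}, plus the cyclic cover, plus one more product factor to absorb the branch divisor cleanly) adds a dimension. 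A secondary subtlety is checking that ``non-$S2$'' is preserved under the finite cyclic cover $\bar\cV\to Y_2$ on the central fiber; this follows because $\nu$ being a \emph{small} universal homeomorphism means $\bar\cY_s\to Y_{2,s}^{\nu}$-type normalization data is unaffected by the finite cover in codimension one, but it must be verified that the cover does not accidentally separate the sheet that caused the $S2$-failure — which it does not, because by construction the section defining the cover can be chosen pulled back from $Y_2$ up to the universal homeomorphism, hence ``does not see'' the pathology.
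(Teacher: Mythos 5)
Your plan has the right overall shape (a prime-to-$2$ cyclic cover, transfer of the pathology via \autoref{l-eq_cond_AES_bigsa}, and a spreading out over $\bF_2$ to absorb the imperfect field), but the step on which everything hinges --- the choice of branch divisor --- does not work as you set it up, and it is exactly where the paper's proof takes a different route. You propose to branch along a general member of $|mD|$ (or of $|m(K_{X/S}+D)|$). Both $D$ and $K_X+D$ are $\bQ$-linearly pullbacks from $Y_2$ (recall $K_X\sim_{\bQ}-2W_\infty$, $W_\infty$ is pulled back from the cone $C$, and $K_X+D\sim_{\bQ}H_{2,X}(W_\infty)=\pi_2^*(\textup{ample})$), so these linear systems are composite with the contraction $\pi_2$: every member restricts on $W_0$ to a member of $|p_2^*(mH_2)|$, i.e.\ contains two-dimensional fibers of $\pi_2|_{W_0}$, and there is no Bertini-type genericity available in the fiber directions of $\pi_2$. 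Worse, smoothness of the cover over the DVR requires controlling the restriction of the branch divisor to $X_s$, and the restriction map on sections of precisely these line bundles is the map whose failure of surjectivity is the whole point of the construction --- so ``take $\Delta$ general and transverse to $X_s$'' is not something you are entitled to. The paper avoids both issues by giving up projectivity (the statement only asserts a family with a birational contraction; per the conventions the schemes are quasi-projective): it passes to $X^0=\tau^{-1}p_2^{-1}(\bA^1_S)$, over which $H_{2,X}$ trivializes so that $K_{X^0}+(l-1)W_\infty|_{X^0}\sim_{Y,\bQ}0$ with $Y=\pi_2(X^0)$ still having non-$S2$ central fiber, and it branches along $\Sigma|_{X^0}$ with $\Sigma\in|lW_\infty|$ regular and $l$ odd; since $|lW_\infty|$ comes from the cone $C$ and the vertex is only a section of $C\to S$, such a $\Sigma$ can be chosen disjoint from $W_0$, which is what makes regularity of $V$ attainable. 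Finally, your step (4) (``the cover does not see the pathology'') is not an argument: the paper transfers the failure of AES to $V$ by the trace splitting, valid because $l$ is prime to $2$, exhibiting $\omega^m_{X^0/S}(m(l-1)W_\infty|_{X^0})$ as a direct summand of $\mu_*\omega^m_{V/S}$, and then applies \autoref{l-eq_cond_AES_bigsa} to the Stein factorization of $\pi\circ\mu$ to conclude that $\bar{Y}_s$ is not $S2$ and $K_V\sim_{\bar{Y},\bQ}0$.

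A second, more minor but genuine error is your dimension count. No product with $\bP^r$, blow-up, or cone trick is used, and none is needed: the cyclic cover preserves dimension, and the fibers become $7$-dimensional only because spreading out replaces the base field $k=\bF_2(\alpha_1,\alpha_2,\alpha_3)$ by the three-dimensional parameter space $U\subset\bA^3_{\bF_2}$, so the relative dimension $4$ of $V$ over $S$ becomes $4+3=7$ over $\bF_2\llbracket s\rrbracket$ --- the same mechanism by which the Fano $3$-folds of \autoref{t-ne_MFS} become the $6$-folds of \autoref{t-ne_MFS_pf}. Your suggestion to add auxiliary factors to salvage smoothness would in any case not address the obstruction above, since the problem is the position of the branch divisor relative to $W_0$ and to the central fiber, not a lack of ambient dimension.
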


\begin{remark}
    It remains an open question whether the kind of behavior explained in \autoref{t-failure_AIP_gt} can occur also over a mixed characteristic DVR.
\end{remark}

\begin{remark}
    We point out that, for families of surface pairs satisfying HNCC (\autoref{d-HNCC}), relative MMP always restricts to fiberwise MMP (\cite{BBS}*{2.6}). Similarly, for families of surface pairs with semiample canonical divisor, forming the relative canonical model commutes with restriction to a fiber, at least when the residue characteristic is $>5$ (\cite{ABP}*{Section 4}).
\end{remark}

\noindent
\textbf{Acknowledgment.} We are grateful to Fabio Bernasconi for helpful comments. We thank the Center of Mathematical Sciences and Applications (CMSA) for financial support.

\section{Preliminaries}
\subsection{Conventions and notation}\label{ss-conventions_notations}
All of our schemes will be (geometrically) integral, and quasi-projective over a field or DVR (typically $k,k(\!(s)\!)$, or $k\llbracket s\rrbracket$, respectively).

A morphism of schemes $f\colon X\to Y$ is a \textit{universal homeomorphism} if, for all morphisms $Y'\to Y$ the base-change $f_{Y'}\colon X\times_Y Y' \to Y'$ is a homeomorphism. A finite universal homeomorphism is said to be \textit{small} if it is an isomorphism in codimension one. Let $k$ be a field of positive characteristic: it is known that a finite morphism of $k$-schemes is a universal homeomorphism if and only if it factors a power of the $k$-linear Frobenius morphism (\cite{Kol_QuotSpaces}*{Proposition 6.6}). As a consequence, if $f\colon X\to Y$ is a universal homeomorphism of $k$-schemes, then $f$ induces an isomorphism
\begin{center}
    \begin{tikzcd}
        \textup{Pic}(Y)\left[\frac{1}{p}\right]\arrow[r,"f^*"] & \textup{Pic}(X)\left[\frac{1}{p}\right].
    \end{tikzcd}
\end{center}
In particular, when $X$ and $Y$ are projective $k$-schemes, $f$ identifies the spaces of (co)dimension one cycles with real coefficients modulo numerical equivalence on $X$ and $Y$. We will be tacitly using this fact throughout the paper.

\begin{enumerate}
    \item If $f\colon X\to Y$ is a morphism and $D$ is a $\bQ$-Cartier $\bQ$-divisor on $Y$, we will often write $D_X$ as shorthand for $f^*D$.
    \item We will often use the terms ``$\bQ$-Cartier $\bQ$-divisor'' and ``$\bQ$-line bundle'' interchangeably.
    \item A \textit{contraction} is a proper morphism of schemes $f\colon X\to Y$ such that $f_*\cO_X=\cO_Y$. Note that if $X$ is normal, then so is $Y$.
    \item A \textit{pair} $(X,D)$ consists of a normal scheme $X$, quasi-projective over $S$, and a $\bQ$-divisor $D\geq 0$ such that $K_X+D$ is $\bQ$-Cartier. When $X$ is a scheme over a field we further require it to be geometrically integral.
    \item We say $\varphi_X\colon (X,D)\to S$ is a \textit{family of pairs} (resp. a \textit{projective family of pairs}) if $(X,D)$ is a pair, the morphism $\varphi_X\colon X\to S$ is surjective (resp. a projective contraction) with normal and geometrically integral fibers, and $\varphi_X|_{D_i}\colon D_i\to S$ is flat for all irreducible components $D_i$ of $\supp(D)$. In particular, $(X_t,D_t)$ is a pair for all $t\in S$.
    \item A line bundle $L$ on a quasi-projective $S$-scheme $X$ is called \textit{ample} (resp. \textit{big}) if there exists an open immersion $j\colon X \hookrightarrow X'$ into a projective $S$-scheme $X'$ and an ample (resp. big) line bundle $L'$ on $X'$ such that $L \cong j^*L'$.
    \item A line bundle $L$ on a quasi-projective $S$-scheme $X$ is called \textit{semiample} if there exists an open immersion $j\colon X \hookrightarrow X'$ into a projective $S$-scheme $X'$ and a semiample line bundle $L'$ on $X'$ such that $L \cong j^*L'$ and, letting $f'\colon X'\to Z$ be the corresponding semiample contraction, the restriction $f'\coloneqq f\circ j$ is a contraction as well. We will refer to $f$ as the \textit{semiample contraction of $L$}.
    \item If $L$ is a semiample $\bQ$-divisor on a proper $S$-scheme $X$, a \textit{general element of $|L|_{\bQ}$} means a $\bQ$-divisor of the form $D/m$, where $D\in |mL|$ is general and $m\geq 1$ is divisible enough.
\end{enumerate}

We refer the reader to \cite{Kol_SMMP} for the definitions of the various classes of singularities of pairs appearing in the Minimal Model Program (terminal, canonical, klt, plt, lc).

\begin{definition}\label{d-HNCC}
    Let $\varphi_X\colon (X,D)\to S$ be a projective family of pairs. We say it satisfies \textit{condition HNCC}\footnote{``Horizontal Non-Canonical Centers''.} if, whenever $N\subset \bB_-(K_X+D)$ is a non-canonical center of $(X,D+X_s)$, then $N$ is $\varphi_X$-horizontal.
\end{definition}

\begin{remark}
    Condition HNCC is necessary for ensuring that, given a family of pairs, the relative MMP restricts to a fiberwise MMP (see \cite{KawEXT}*{Example 4.3}). In characteristic zero, it is also sufficient (\cite{HMX}*{Lemmas 3.1 and 3.2}). By \autoref{t-ne_MFS}, \autoref{t-ne_dc}, and \autoref{t-ne_flip}, this is not the case in positive characteristic.
\end{remark}

\begin{definition}\label{d-AES}
    Let $X$ be an integral normal scheme, let $\varphi_X\colon X\to S$ be a surjective morphism with normal and geometrically integral fibers, and let $L$ be a line bundle on $X$. We say $L$ satisfies \textit{condition AES}\footnote{``Asymptotic Extension of Sections''.} if, for all $m\geq 0$ divisible enough, the restriction map 
    \begin{center}
        \begin{tikzcd}
            H^0\left(X,L^m\right)\arrow[r] & H^0\left(X_s,L_s^m\right)
        \end{tikzcd}
    \end{center}
    is surjective for all $m\geq 0$ divisible enough. When $\varphi_X$ is proper, this is equivalent to $h^0(X_t,L_t^m)$ being independent of $t\in S$ for all such $m$. 
\end{definition}

\begin{lemma}\label{l-eq_cond_AES_bigsa}
    Let $X$ be an integral normal scheme, let $\varphi_X\colon X\to S$ be a quasi-projective surjective morphism with normal and geometrically integral fibers, and let $L$ be a big and semiample line bundle on $X$. Let $\pi\colon X\to Y$ be the semiample contraction of $L$. Then the following are equivalent:
    \begin{itemize}
        \item[(1)] $L$ satisfies AES;
        \item[(2)] $Y_s$ is normal;
        \item[(3)] $R^1\pi_*\cO_X$ is torsion-free over $S$.
    \end{itemize}
\end{lemma}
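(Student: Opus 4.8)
The plan is to describe the Stein factorisation of $\pi_s\colon X_s\to Y_s$ explicitly and then to read off all three conditions from the exact sequences induced by multiplication by $s$. Replacing $L$ by a divisible enough multiple changes none of $(1)$, $(2)$, $(3)$, so I would first arrange $L=\pi^*A$ with $A$ an ample line bundle on $Y$; and by a standard compactification argument --- using that the semiample contraction of $L$ is by construction induced by a projective one --- I would reduce to the case where $\varphi_X$, and hence $\varphi_Y$, is projective. Because $L$ is big, $\pi$ is birational; because $Y$ is normal, $\pi$ is an isomorphism over the generic point of every prime divisor of $Y$, in particular over that of $Y_s$, so $\pi_s$ is birational. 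Then $\widetilde{Y_s}\coloneqq\relSpec_{Y_s}(\pi_{s*}\cO_{X_s})$ is finite and birational over $Y_s$ and $X_s\to\widetilde{Y_s}$ is a contraction; since $X_s$ is normal, so is $\widetilde{Y_s}$, so $\widetilde{Y_s}=(Y_s)^\nu$ and $\pi_{s*}\cO_{X_s}=\nu_*\cO_{(Y_s)^\nu}$, where $\nu\colon(Y_s)^\nu\to Y_s$ denotes the normalisation. Writing $\cF_0\coloneqq\nu_*\cO_{(Y_s)^\nu}/\cO_{Y_s}$, this means that $(2)$ holds if and only if $\cF_0=0$.

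For $(2)\Leftrightarrow(3)$ I would push $0\to\cO_X\xrightarrow{\,s\,}\cO_X\to\cO_{X_s}\to 0$ forward along $\pi$; using $\pi_*\cO_X=\cO_Y$ and $\pi_*\cO_{X_s}=\pi_{s*}\cO_{X_s}=\nu_*\cO_{(Y_s)^\nu}$ this produces the exact sequence
\begin{equation*}
0\longrightarrow\cO_{Y_s}\longrightarrow\nu_*\cO_{(Y_s)^\nu}\longrightarrow\bigl(R^1\pi_*\cO_X\bigr)[s]\longrightarrow 0,
\end{equation*}
so that the $s$-torsion of $R^1\pi_*\cO_X$ is exactly $\cF_0$. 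Over the DVR $S$ a coherent sheaf is torsion-free precisely when its $s$-torsion vanishes, giving $(3)\Leftrightarrow\cF_0=0\Leftrightarrow(2)$.

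For $(1)\Leftrightarrow(2)$, with $k\coloneqq m/N$, the projection formula and $\pi_*\cO_X=\cO_Y$ give $H^0(X,L^m)=H^0(Y,A^k)$, whence $h^0(X_\eta,L^m_\eta)=h^0(Y_\eta,A^k_\eta)=h^0(Y_s,A^k_s)$ for $k$ divisible enough by cohomology and base change for $\varphi_Y$. On the closed fibre $H^0(X_s,L^m_s)=H^0((Y_s)^\nu,\nu^*A^k_s)$ by the above, and tensoring $0\to\cO_{Y_s}\to\nu_*\cO_{(Y_s)^\nu}\to\cF_0\to 0$ with the ample $A^k_s$ and taking cohomology gives $h^0((Y_s)^\nu,\nu^*A^k_s)=h^0(Y_s,A^k_s)+h^0(Y_s,A^k_s\otimes\cF_0)$ for $k$ divisible enough. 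Since $\varphi_X$ is proper, $(1)$ is equivalent to $h^0(X_s,L^m_s)=h^0(X_\eta,L^m_\eta)$ for $m$ divisible enough, which by the two computations means $h^0(Y_s,A^k_s\otimes\cF_0)=0$ for $k$ divisible enough; as $A_s$ is ample this is equivalent to $\cF_0=0$, i.e. to $(2)$.

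I expect the main obstacle to be the identification $\pi_{s*}\cO_{X_s}=\nu_*\cO_{(Y_s)^\nu}$. It genuinely uses that $L$ is \emph{big} (so that $\pi_s$ is birational), together with the normality of $Y$ (which forces the image of $\Exc(\pi)$ to have codimension $\geq 2$) and the fact that a contraction out of a normal scheme has normal target, applied on the closed fibre. The other slightly delicate point is the reduction to the projective case, which is what makes cohomology and base change and Serre vanishing available for $\varphi_Y$ and for $Y_s$; granting these, the remaining manipulations are formal.
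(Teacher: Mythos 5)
Your proposal is essentially correct in the projective setting, and for $(2)\Leftrightarrow(3)$ it runs on the same engine as the paper: pushing $0\to\cO_X\xrightarrow{\,s\,}\cO_X\to\cO_{X_s}\to 0$ forward along $\pi$, so that the $s$-torsion of $R^1\pi_*\cO_X$ is identified with $\pi_{s*}\cO_{X_s}/\cO_{Y_s}$. The paper stops there: $(3)$ holds iff $\pi_s$ is a contraction, which gives normality of $Y_s$ because $X_s$ is normal, and conversely normality of $Y_s$ gives the contraction property by Zariski's Main Theorem. Your further identification $\pi_{s*}\cO_{X_s}=\nu_*\cO_{(Y_s)^\nu}$ (via bigness, birationality of $\pi_s$, and the codimension of $\pi(\Exc(\pi))$) is correct but not needed for the lemma; note also that it presupposes $Y_s$ integral, which you should extract from the injection $\cO_{Y_s}\hookrightarrow\pi_{s*}\cO_{X_s}$ (i.e.\ from the very sequence you write down) before speaking of the normalization. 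For $(1)$ you take a genuinely different route: you compare $h^0$ on the two fibers of $\varphi_Y$ using flatness, cohomology and base change, and Serre vanishing, reducing $(1)$ to $H^0(Y_s,A^k_s\otimes\cF_0)=0$, hence to $(2)$. The paper instead links $(1)$ directly to $(3)$: surjectivity of $H^0(X,L^m)\to H^0(X_s,L^m_s)$ is equivalent to injectivity of $\cdot s$ on $H^1(X,L^m)\cong H^0(Y,R^1\pi_*\cO_X\otimes A^m)$, and global generation of $R^1\pi_*\cO_X\otimes A^m$ converts this into torsion-freeness of $R^1\pi_*\cO_X$. Both work, but the paper's version never needs properness of $\varphi_X$, nor the reformulation of AES as constancy of $h^0(X_t,L^m_t)$.

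This points to the one real soft spot in your sketch: the opening ``reduction to the projective case.'' It is not a formal reduction, since replacing $X$ by a compactification $X'$ changes $H^0(X,L^m)$, the semiample contraction and its target, and hence all three conditions at once; proving the lemma for $X'$ does not yield it for $X$. Your route genuinely uses properness of $\varphi_X$ (for ``AES $\Leftrightarrow$ $h^0(X_s,L^m_s)=h^0(X_\eta,L^m_\eta)$'') and of $\varphi_Y$ (for base change and Serre vanishing on $Y$, $Y_s$, $(Y_s)^\nu$), so the step you yourself flag as delicate is carrying real weight. The paper sidesteps the first issue by working directly with the restriction map on $X$ and only invoking positivity of $A$ on $Y$ (its conventions are set up so that $\pi$ is the restriction of a projective semiample contraction, hence a projective contraction with $Y$ flat over $S$); the vanishing statements on $Y$ are a subtlety shared by both arguments. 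To repair your write-up without abandoning the fiberwise count, either assume outright that $Y$ is projective over $S$ (which is the situation the paper's conventions produce), or replace the dimension comparison by the factorization $H^0(Y,A^k)\to H^0(Y_s,A^k_s)\to H^0((Y_s)^\nu,\nu^*A^k_s)$, where the first map is surjective for $k\gg 0$ and the second is surjective iff $\cF_0=0$; this removes the dependence on properness of $\varphi_X$, though not on properness of $Y$ over $S$.
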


\begin{proof}
    The morphism $\pi\colon X\to Y$ is a projective contraction of normal quasi-projective integral schemes, hence $Y\to S$ is a flat morphism. By construction, there exists a $\bQ$-line bundle $A$ on $Y$ such that $L\sim_{\bQ} \pi^*A$. Let $m\geq 0$ be a sufficiently divisible integer, and consider the standard exact sequence
    \begin{center}
        \begin{tikzcd}
            0\arrow[r] & L^m\arrow[r,"\cdot s"] & L^m\arrow[r] & L^m_s\arrow[r] & 0.
        \end{tikzcd}
    \end{center}
    Taking global sections, we see that point (1) is equivalent to the map $H^1(X,L^m)\xrightarrow{\cdot s} H^1(X,L^m)$ being injective. By the Grothendieck spectral sequence and Serre vanishing, we have an isomorphism $H^1(X,L^m)\cong H^0(Y,R^1\pi_*\cO_X\otimes A^m)$. As $A$ is ample, the sheaf $R^1\pi_*\cO_X\otimes A^m$ is generated by global sections. In particular, as $A^m$ is invertible and $Y$ is flat over $S$, point (1) is equivalent to the injectivity of $R^1\pi_*\cO_X\xrightarrow{\cdot s}R^1\pi_*\cO_X$, i.e. point (3). To see the equivalence with point (2), consider the short exact sequence
    \begin{center}
        \begin{tikzcd}
            0\arrow[r] & \cO_X\arrow[r,"\cdot s"] & \cO_X\arrow[r] & \cO_{X_s}\arrow[r] & 0.
        \end{tikzcd}
    \end{center}
    As $\pi$ is a contraction, by pushing forward we obtain
    \begin{center}
        \begin{tikzcd}
            \cO_Y\arrow[r,"\cdot s"] & \cO_Y\arrow[r] & \pi_{s,*}\cO_{X_s}\arrow[r] & R^1\pi_*\cO_X\arrow[r,"\cdot s"] & R^1\pi_*\cO_X.
        \end{tikzcd}
    \end{center}
    Then we see that point (3) is equivalent to $\pi_s$ being a contraction, and this clearly implies that $Y_s$ is normal. Conversely, if $Y_s$ is normal, then $\pi_s$ is a contraction by Zariski's Main Theorem (\cite{EGA4}*{Lemme 8.12.10.I}).
\end{proof}

\subsection{Maddock's del Pezzo}
From now until the end of the paper we set $k\coloneqq\bF_{2}(\alpha_1,\alpha_2,\alpha_3)$.
\begin{theorem}\label{t-maddock's_dP}
    There exists a regular projective $k$-surface $M$ which is geometrically integral and
    \begin{itemize}
        \item $\omega^{-1}_{M/k}$ is ample,
        \item $\rho(M)=1$,
        \item $\PPic^0_{M/k}$ is smooth and one-dimensional,
        \item the Poincar\'e bundle $P$ satisfies $P^4\cong\cO_{M\times\PPic^0_{M/k}}$.
    \end{itemize}
\end{theorem}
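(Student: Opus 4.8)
The plan is to take $M$ to be the regular del Pezzo surface with nonzero irregularity constructed by Maddock, to import the first two bullet points from there, and to establish the statements about $\PPic^0_{M/k}$ by a direct analysis. Concretely, $M$ is a (weighted projective) hypersurface whose defining equation has coefficients among $\alpha_1,\alpha_2,\alpha_3$; the essential input is that $\{\alpha_1,\alpha_2,\alpha_3\}$ is a $p$-basis of $k$, so that $[k:k^2]=8$. First I would check that $M$ is regular: the only candidate non-regular points are those at which every partial derivative of the defining equation vanishes, and a short computation shows that on $M$ such a point would have some coordinate equal to a square root $\alpha_i^{1/2}\notin k$. Thus $M$ is regular over $k$, whereas $M\times_k\bar k$ acquires an entire curve of non-regular points, so that $M$ is not geometrically normal — exploiting the imperfectness of $k$ is exactly what makes this possible. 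It is nonetheless geometrically integral, the defining polynomial being irreducible and reduced over $\bar k$. That $\omega_{M/k}^{-1}$ is ample is adjunction on the (weighted) hypersurface, and $\rho(M)=1$ comes from Maddock's analysis: the normalization $\nu\colon\widetilde M\to M\times_k\bar k$ is a rational surface, so $\PPic^0_{\widetilde M/\bar k}=0$, the geometric Picard rank is governed by the conductor, and since $-K_M$ already spans a rank-one subgroup one obtains $\rho(M)=1$.

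For $\PPic^0_{M/k}$, note first that $\PPic_{M/k}$ exists since $M$ is projective over $k$, and its tangent space at the origin is $H^1(M,\cO_M)$, which is one-dimensional by Maddock's cohomology computation — it is nonzero precisely because $M\times_k\bar k$ is not normal — so $\PPic^0_{M/k}$ is nontrivial of dimension exactly one. For smoothness (which in characteristic $2$ is genuine content) and in order to identify the group, I would base change to $\bar k$, where the Picard scheme commutes with extension of the ground field, and feed the conductor square $\cO_{M\times_k\bar k}\hookrightarrow\cO_{\widetilde M}$ into its associated Mayer--Vietoris sequence of unit and Picard sheaves: since $\PPic^0_{\widetilde M/\bar k}=0$, this identifies $\PPic^0_{M\times_k\bar k/\bar k}$ with a group built from the unit sheaves of the conductor subscheme of $M\times_k\bar k$ and of its preimage in $\widetilde M$. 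Because the non-normal locus of $M\times_k\bar k$ lies over the (necessarily inseparable) non-smooth locus of $M\to\Spec k$, the conductor ideal is non-reduced, and one should be able to check directly that the resulting group is a smooth, connected, one-dimensional unipotent group, that is, a form of $\mathbb{G}_a$; in particular it contains no $\mathbb{G}_m$. Descending this group scheme along $\bar k/k$ then shows $\PPic^0_{M/k}$ is smooth and one-dimensional. Finally, $P^4\cong\cO_{M\times\PPic^0_{M/k}}$ is formal: after rigidifying, the biextension property of the Poincaré bundle gives $(\mathrm{id}_M\times[n])^*P\cong P^{\otimes n}$ for every $n$, so it suffices to bound the exponent of $\PPic^0_{M/k}$, and a form of $\mathbb{G}_a$ in characteristic $2$ is killed by $2$ — giving even $P^{\otimes 2}\cong\cO$.

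The main obstacle is this middle step. One must compute the conductor of the \emph{geometric} surface $M\times_k\bar k$ explicitly enough to recognize the group arising from the Mayer--Vietoris sequence as smooth, connected, and one-dimensional — ruling out an infinitesimal part (smoothness) and a multiplicative part (unipotence) — and then track this structure back to the imperfect field $k$ along the (purely inseparable) descent. Picard schemes of non-normal surfaces, and a fortiori of geometrically non-normal ones, are notoriously delicate, and essentially all the work lies here; the regularity check, the ampleness of $\omega_{M/k}^{-1}$, geometric integrality, and $\rho(M)=1$ are short local computations with the explicit equation, or direct appeals to Maddock's paper.
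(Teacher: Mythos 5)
Your proposal has a genuine gap at exactly the point you yourself flag as ``the main obstacle'': the smoothness and one-dimensionality of $\PPic^0_{M/k}$ are never actually established, only reduced to an uncompleted computation of the Picard scheme of the non-normal surface $M\times_k\bar k$ via the conductor square, together with an unverified identification of the resulting group as a form of $\mathbb{G}_a$ and a descent back to $k$. Moreover the preliminary step is logically off: the tangent space $H^1(M,\cO_M)$ being one-dimensional only bounds $\dim\PPic^0_{M/k}$ above by one; by itself it does not rule out that $\PPic^0_{M/k}$ has dimension zero with the tangent direction coming from an infinitesimal (non-reduced) part, so your conclusion ``nontrivial of dimension exactly one'' does not follow at that stage. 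The missing, and much simpler, idea is obstruction theory: since $\omega_{M/k}^{-1}$ is ample, Serre duality gives $H^2(M,\cO_M)\cong H^0(M,\omega_{M/k})^\vee=0$, hence $\PPic_{M/k}$ is smooth and $\dim\PPic^0_{M/k}=h^1(M,\cO_M)=1$; this is precisely the paper's argument via \cite{FGA}*{Proposition 9.5.19}, with no need to analyze the geometric Picard scheme or the conductor at all.

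The remaining bullets also either deviate or rest on the unfinished step. For $\rho(M)=1$ the paper uses that Maddock's construction makes $M$ universally homeomorphic to $\bP^2_k$, which identifies N\'eron--Severi groups after inverting $p$; your sketch (``$-K_M$ spans a rank-one subgroup, hence $\rho=1$'') only gives the lower bound $\rho(M)\geq 1$, and the conductor argument that would supply the upper bound is not carried out. For the torsion of the Poincar\'e bundle, the paper simply quotes \cite{BT_dP}*{Theorem 1.3} to get $P^4\cong\cO_{M\times\PPic^0_{M/k}}$, whereas your deduction of $P^{\otimes 2}\cong\cO$ presupposes that $\PPic^0_{M/k}$ is unipotent (a form of $\mathbb{G}_a$ rather than a form of $\mathbb{G}_m$ or an elliptic curve) --- exactly the content of the computation you did not do --- and the rigidification you invoke also needs some care since a $k$-point of $M$ is not given. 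The strategy could in principle be completed, but as written the central claims are asserted rather than proved, while a short standard argument plus the cited reference suffices.
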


\begin{proof}
    In \cite{Mad_irregDP} the author constructs a regular del Pezzo surface $M$ over $k$ with $h^1(M,\cO_M)=1$. From the construction, it follows that $M$ is universally homeomorphic to $\bP^2_k$, hence $\rho(M)=1$. By \cite{FGA}*{Theorem 9.5.4}, we have that $\PPic^0_{M/k}$ exists as a quasi-projective $k$-scheme. Since $H^2(M,\cO_M)=0$, we have that $\PPic^0_{M/k}$ is smooth and one-dimensional (\cite{FGA}*{Proposition 9.5.19}). The last point follows by \cite{BT_dP}*{Theorem 1.3}.
\end{proof}

\section{Construction}

\subsection{A special family of Fano threefolds}\label{ss-special_family_fano3fold} Let $M$ be Maddock's del Pezzo surface and let $S$ be the completion of $\PPic^0_{M/k}$ at the origin, so that $S=\Spec (k\llbracket s \rrbracket)$. Upon replacing $P$ by its square, we may assume that it is 2-torsion\footnote{This is not strictly necessary, but it will simplify some computations.}. We will abuse notation and still denote by $P$ its base-change to $M_S\coloneq M\times S$.

Consider $Z\coloneqq M_S\times _S \bP_S^1$ and let $L\coloneqq P\boxtimes\cO(1)$. Fix homogeneous coordinates $[u:v]$ on $\bP^1_S$ and consider the section $\sigma\coloneqq 1_{P^2}\boxtimes uv\in H^0(M_S\times_S \bP^1_S,L^2)$, where $1_{P^2}\in H^0(M_S,P^2)$ is a nowhere vanishing section. Let $W\coloneqq Z[\sqrt{\sigma}]$ be the associated 2-to-1 cyclic cover. It fits in the commutative diagram
\begin{equation}\label{e-MFS}
    \begin{tikzcd}
         & W\arrow[d,"\gamma"]\arrow[ddl,"p_1",swap, bend right=10]\arrow[ddr,"p_2",bend left=10] & \\
         & Z\arrow[dl,"\pr_1"]\arrow[dr,"\pr_2",swap] & \\
        M_S\arrow[dr] &  & \bP^1_S\arrow[dl]\\
         & S. & 
    \end{tikzcd}
\end{equation}

A straightforward computation (see \cite{Bri20}*{3.1}) shows that the central fiber $W_s$ sits in the Cartesian square
\begin{equation}\label{e-cartesian_central_fiber}
    \begin{tikzcd}
        W_s\arrow[r,"\gamma_s"]\arrow[d]\arrow[dr,"p_{2,s}"] & Z_s\arrow[d,"\pr_{2,s}"]\\
        (\bP^1_k)^{(-1)}\arrow[r,"F_{\bP^1_k/k}",swap] & \bP^1_k.
    \end{tikzcd}
\end{equation}

We summarize the properties that will be relevant for us.
\begin{lemma}
    Keeping the above notation, the following hold.
    \begin{itemize}
    \item[(1)] The induced morphism $\varphi_W\colon W\to S$ is a flat projective contraction with regular and geometrically integral fibers.
    \item[(2)] The morphisms $p_1$ and $p_2$ are both contractions of relative Picard rank one.
    \item[(3)] The restriction $p_{1,s}\colon W_s\to M_s$ is a contraction.
    \item[(4)] The restriction $p_{2,s}\colon W_s\to \bP^1_k$ has a non-trivial Stein factorization.
    \item[(5)] $\omega_{W/S}^{-1}$ is ample.
\end{itemize}
\end{lemma}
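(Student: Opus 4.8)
The plan is to verify the five items by tracing through the cyclic-cover construction and the diagrams \eqref{e-MFS} and \eqref{e-cartesian_central_fiber}. First I would recall the general structure of a degree-two cyclic cover $\gamma\colon W = Z[\sqrt{\sigma}] \to Z$ attached to the line bundle $L$ and the section $\sigma \in H^0(Z, L^2)$: as a $Z$-scheme $W = \relSpec_Z(\cO_Z \oplus L^{-1})$ with algebra structure determined by $\sigma$, so $\gamma$ is finite flat of degree two, and $W$ is regular away from the preimage of the zero locus $\{\sigma = 0\}$; along that preimage $W$ is regular provided $\{\sigma=0\}$ is reduced (or at worst the cover is ramified along a regular divisor). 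Here $\{\sigma = 0\} = M_S \times_S (\{u=0\} \cup \{v=0\})$, two disjoint sections of $\pr_2$, each isomorphic to $M_S$; since $M$ is regular and $S$ is regular, $M_S$ is regular, and the two components are disjoint, so $W$ is regular. For item (1), flatness and projectivity of $\varphi_W = \varphi_Z \circ \gamma$ are immediate ($\varphi_Z \colon Z = M_S \times_S \bP^1_S \to S$ is a flat projective contraction and $\gamma$ is finite flat); the fibers $W_t$ are the degree-two covers of $Z_t = M_t \times_k \bP^1_{\kappa(t)}$ branched over the restriction of $\{\sigma=0\}$, and one checks geometric integrality and regularity fiber by fiber — the generic fiber is a genuine double cover (since $P_\eta$ is nontrivial of order two the cover is connected, hence integral, and regular by the disjointness argument above), while for the closed fiber one uses \eqref{e-cartesian_central_fiber}: $W_s$ is the pullback of $Z_s$ along the Frobenius $F_{\bP^1_k/k}$, and since $Z_s \to \bP^1_k$ is smooth with geometrically integral fibers ($M_s \cong M$), so is $W_s \to (\bP^1_k)^{(-1)}$, whence $W_s$ is regular and geometrically integral. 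Here the slightly delicate point — the one I expect to be the main obstacle — is checking regularity of $W$ and of $W_s$ at the ramification, i.e. that the double cover does not acquire singularities in characteristic two where cyclic covers are subtle; the resolution is precisely that $\sigma = 1_{P^2}\boxtimes uv$ cuts out a reduced regular divisor (two disjoint copies of $M_S$), together with the Cartesian description \eqref{e-cartesian_central_fiber} identifying $W_s$ with a Frobenius base change of the smooth morphism $\pr_{2,s}$.

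For item (2), the morphisms $p_1 = \pr_1 \circ \gamma$ and $p_2 = \pr_2 \circ \gamma$ are proper, and $p_{i,*}\cO_W = \cO$ follows because $\pr_{i,*}\gamma_*\cO_W = \pr_{i,*}(\cO_Z \oplus L^{-1})$ and the summand $\pr_{i,*}L^{-1}$ vanishes: for $i = 2$ one has $L^{-1} = P^{-1}\boxtimes \cO(-1)$ so $\pr_{2,*}(P^{-1}\boxtimes\cO(-1)) = (\text{pushforward of }P^{-1}\text{ to }S)\otimes \pr_{2,*}\cO(-1) = 0$ since $\cO_{\bP^1}(-1)$ has no sections; for $i=1$ one has $\pr_{1,*}(P^{-1}\boxtimes\cO(-1)) = P^{-1}\otimes (\text{pushforward of }\cO_{\bP^1}(-1)) = 0$ similarly. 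The relative Picard rank one claim follows from $\rho(M) = 1$ (\autoref{t-maddock's_dP}) and the fact that $\gamma$, being a universal homeomorphism onto $Z$ — it factors the relative Frobenius on the closed fiber and is finite purely inseparable generically — identifies Néron–Severi groups up to $p$-torsion, so $\rho(W/M_S) = \rho(\bP^1_S/S) = 1$ and $\rho(W/\bP^1_S) = \rho(M_S/S) = 1$. Item (3) is the case $i=1$ of the base change of item (2): $p_{1,s} = \pr_{1,s}\circ \gamma_s$, and $\gamma_{s,*}\cO_{W_s} = \cO_{Z_s}\oplus L_s^{-1}$ with $\pr_{1,s,*}L_s^{-1} = \pr_{1,s,*}(P_s^{-1}\boxtimes\cO(-1)) = P_s^{-1}\otimes H^0(\bP^1_k,\cO(-1)) = 0$, so $p_{1,s,*}\cO_{W_s} = \cO_{M_s}$; equivalently, along $M_S$ (the direction of $P$, which becomes trivial since $P$ is $2$-torsion) the cyclic cover does not degenerate.

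Item (4) is the heart of the matter and is essentially already recorded in \eqref{e-cartesian_central_fiber}: from that Cartesian square, $p_{2,s}\colon W_s \to \bP^1_k$ factors as $W_s \xrightarrow{\bar p_{2,s}} (\bP^1_k)^{(-1)} \xrightarrow{F_{\bP^1_k/k}} \bP^1_k$, where $\bar p_{2,s}$ is the base change of the smooth fibration $\pr_{2,s}\colon Z_s \to \bP^1_k$ along Frobenius, hence a contraction (it has connected geometrically integral fibers and $\bar p_{2,s,*}\cO_{W_s} = \cO$ since $\pr_{2,s}$ has that property and Frobenius base change preserves it). Therefore the Stein factorization of $p_{2,s}$ is exactly $W_s \to (\bP^1_k)^{(-1)} \to \bP^1_k$, and it is nontrivial because $F_{\bP^1_k/k}$ is finite of degree $2 > 1$ and not an isomorphism. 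For item (5), I would compute $\omega_{W/S}$ by the cyclic-cover formula $\omega_{W/S} \cong \gamma^*(\omega_{Z/S} \otimes L)$ (the standard adjunction for a degree-two cover branched along $\{\sigma = 0\} \in |L^2|$), and then $\omega_{Z/S} = \pr_1^*\omega_{M_S/S}\otimes \pr_2^*\omega_{\bP^1_S/S} = \pr_1^*\omega_{M/k}\otimes\pr_2^*\cO(-2)$, so $\omega_{Z/S}^{-1}\otimes L^{-1} = \pr_1^*\omega_{M/k}^{-1}\otimes\pr_2^*\cO(2) \otimes (P^{-1}\boxtimes\cO(-1)) = (\omega_{M/k}^{-1}\otimes P^{-1})\boxtimes\cO(1)$. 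Since $P$ is $2$-torsion it is numerically trivial, and $\omega_{M/k}^{-1}$ is ample on $M$ by \autoref{t-maddock's_dP}, so $\omega_{M/k}^{-1}\otimes P^{-1}$ is ample on $M$, hence on $M_S$ over $S$; together with $\cO(1)$ being ample on $\bP^1_S$ over $S$, the exterior tensor product is ample over $S$, and pulling back by the finite morphism $\gamma$ preserves ampleness, giving that $\omega_{W/S}^{-1}$ is ample over $S$. (One should be slightly careful that "ample" here is relative over $S$ in the sense of the excerpt's convention (6); since everything is projective over $S$ with the obvious projective compactifications this is automatic.) The only real subtlety throughout — beyond bookkeeping with exterior tensor products — is again the characteristic-two regularity of the double cover, which is why the construction takes $\sigma$ to be the product $uv$ of the two coordinate sections, making the branch locus a disjoint union of regular divisors.
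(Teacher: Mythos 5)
Items (2) and (4) of your argument coincide with the paper's proof, and your treatments of (3) and (5) are correct but take a more direct route than the paper (for (3) you compute $p_{1,s,*}\cO_{W_s}=\pr_{1,s,*}(\cO_{Z_s}\oplus P_s^{-1}\boxtimes\cO(-1))=\cO_{M_s}$ directly, where the paper reduces to an AES statement via \cite{Bri_IPGMM}*{Lemma 3.5}; for (5) you use the cyclic-cover formula $\omega_{W/S}\cong\gamma^*(\omega_{Z/S}\otimes L)$ and ampleness of $(\omega_{M/k}^{-1}\otimes P^{-1})\boxtimes\cO(1)$, where the paper only checks ampleness on the closed fiber via \autoref{e-cartesian_central_fiber} and invokes openness of ampleness). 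Both of these alternative routes are sound.

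The genuine gap is in item (1), precisely at the point you yourself flag as the delicate one. Your regularity criterion for $W$ --- ``regular away from the preimage of $\{\sigma=0\}$, and regular along it provided $\{\sigma=0\}$ is a reduced (regular) divisor'' --- is the characteristic $\neq 2$ statement, and it is false here: in characteristic $2$ the cover $z^2=\sigma$ is nowhere \'etale (it is a finite purely inseparable universal homeomorphism), and it can very well be singular over points where $\sigma$ is a unit. For instance $z^2=1+x^2y$ over $\bA^2_{\bF_2}$ is singular over the origin, which lies far from $\{\sigma=0\}$; the correct local criterion is that, at each point, $\sigma$ minus the square of a lift of its square root is not in $\mathfrak{m}^2$ (equivalently, a suitable partial derivative of $\sigma$ is a unit), and this has nothing to do with the branch divisor being regular. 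So ``the branch locus is a disjoint union of regular copies of $M_S$'' does not resolve the characteristic-$2$ issue, and in particular your claim that the generic fiber $W_\eta$ is regular is unproved as written. There are two ways to close this: (i) the paper's route, which avoids the double-cover analysis entirely --- by \autoref{e-cartesian_central_fiber} one has $W_s\cong Z_s\times_{\bP^1_k}(\bP^1_k)^{(-1)}\cong M\times_k\bP^1_k$, hence $W_s$ is regular and geometrically integral, and these properties then spread from the closed fiber to $W_\eta$ over the DVR (regularity of the closed fiber plus flatness gives regularity of $W$ near $W_s$, hence everywhere by properness, and geometric integrality of fibers is an open condition); or (ii) a direct check that $\sigma=1_{P^2}\boxtimes uv$ has nowhere-vanishing derivative in the $\bP^1$-direction ($\partial_u(uv)=v$ and $\partial_v(uv)=u$ never vanish simultaneously), which is the actual reason the cover and its fibers are regular in characteristic $2$. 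Relatedly, your statement that $\pr_{2,s}\colon Z_s\to\bP^1_k$ is \emph{smooth} is incorrect: $M$ is regular but not smooth over the imperfect field $k$ (a smooth geometrically integral del Pezzo cannot have $h^1(\cO)=1$), so you should argue via the identification $W_s\cong M\times_k(\bP^1_k)^{(-1)}$ (flat base change with regular fibers over a regular base), not via smoothness of $\pr_{2,s}$; the conclusion of (1) for the closed fiber survives, but not for the reason you give.
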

\begin{proof}

    \begin{itemize}
        \item[(1)] Flatness of $\varphi_W$ is automatic (\cite{Har}*{Proposition 9.7}). Since $\gamma_*\cO_W=\cO_Z\oplus L^{-1}$ we also have $\varphi_{W,*}\cO_W=\cO_S$. As $M_s$ is regular and geometrically integral, so is $W_s$. As these properties are open in $S$ (\cite{EGA4}*{Th\'eor\`eme 12.2.1}), the same holds for $W_\eta$. 
        \item[(2)] Again, we have $\gamma_{*}\cO_W=\cO_Z\oplus L^{-1}$. As $\pr_i$ is a contraction and $\pr_{i,*}L^{-1}=0$, we have that $p_i$ is a contraction as well. Since $W$ is universally homeomorphic to $Z$ and $p_i$ to $\pr_i$, we have that $p_i$ have relative Picard rank one.
        \item[(3)] Let $H_1$ be a very ample divisor on $M_S$ and let $H_{1,W}$ be its pullback to $W$. By \cite{Bri_IPGMM}*{Lemma 3.5}, item (3) is equivalent to $H_{1,W}$ satisfying AES. This can be checked directly using the projection and K\"unneth formulae:
        \[
        \begin{split}
             h^0(W_t,H^m_{1,W_t})&=h^0(Z_t,\gamma_{t,*}\cO_{W_t}\otimes H^m_{1,Z_t})\\
                        &=  h^0(M_t,H^m_{1,t})\cdot h^0(\bP^1_t,\cO) +   h^0(M_t,P_t\otimes H^m_{1,t})\cdot h^0(\bP^1_t,\cO(1)) .
        \end{split}
        \]
        Provided that $H_1$ is sufficiently ample, all the terms in the above equation are independent of $t\in S$ for all $m\geq 0$.
        \item[(4)] This follows from \autoref{e-cartesian_central_fiber}, as $\pr_{2,s}$ is a contraction, the morphism $F_{\bP^1_k/k}$ is flat, and forming the Stein factorization commutes with flat base change (\cite{SP}*{Tag 03GX}).
        \item[(5)] By \autoref{e-cartesian_central_fiber} we have that $\omega_{W_s/k}^{-1}$ is ample. Since ampleness is an open condition, we conclude.
    \end{itemize}
\end{proof}

\subsection{M\texorpdfstring{\textsuperscript{c}}{c}Kernan's trick}\label{s-james_trick} Let $A\coloneqq \omega^{-1}_{W/S}$ and consider the projective bundle
\begin{center}
    \begin{tikzcd}
        X\coloneqq \relProj_W(\Sym^*(\cO_W\oplus A))\arrow[r,"\tau"] & W.
    \end{tikzcd}
\end{center}
It comes with two sections with image $W_0$ and $W_\infty$, having normal bundles $N_{W_0/X}=A^{-1}$ and $N_{W_\infty/X}=A$, respectively. In particular, the divisor $W_\infty$ is big and semiample, and its ample model is the projective cone of $W$ over $S$ with respect to $A$ (see \cite{Kol_SMMP}*{3.8})
\begin{center}
    \begin{tikzcd}
        X\arrow[r,"\beta"] & C\coloneqq C_p(W/S,A).
    \end{tikzcd}
\end{center}
The morphism $\beta$ is the resolution obtained by blowing up the $S$-point of $C$ corresponding to the vertex of the cone.
\begin{proposition}\label{p-AES_nAES_mckernantrick}
    Let $H_1$ and $H_2$ be sufficiently ample line bundles on $M_S$ and $\bP^1_S$, respectively, and denote by $H_{i,X}$ the pullback to $X$. Then the line bundles $H_{i,X}(W_\infty)$ are big and semiample over $S$ and, letting $\pi_i\colon X\to Y_i$ be the corresponding birational contraction, the following hold.
    \begin{enumerate}
        \item $\pi_{1,s}\colon X_s\to Y_{1,s}$ is a birational contraction; equivalently, $H_{1,X}(W_\infty)$ satisfies AES.
        \item $\pi_{2,s}$ has Stein factorization
        \begin{center}
            \begin{tikzcd}
            X_s\arrow[r,"\bar{\pi}_{2,s}"] & (Y_{2,s})^\nu\arrow[r,"\nu"] & Y_{2,s},         
            \end{tikzcd}
        \end{center}
        where the demi-normalization $\nu$ is a small universal homeomorphism which is not an isomorphism; equivalently, $H_{2,X}(W_{\infty})$ does not satisfy AES.
    \end{enumerate}
\end{proposition}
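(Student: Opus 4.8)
The plan is to read off the semiample contraction of $L^{(i)}\coloneqq H_{i,X}(W_\infty)=H_{i,X}+W_\infty$ from its section ring, and then to apply \autoref{l-eq_cond_AES_bigsa}. First, $L^{(i)}$ is big and semiample over $S$: it is the sum of the semiample divisor $W_\infty$ (with semiample contraction $\beta$) and the semiample divisor $H_{i,X}=\tau^*p_i^*H_i$ (the pullback of an ample class along the contraction $p_i\circ\tau$), and a sum of semiample divisors is semiample; moreover $\beta$ is birational, so $W_\infty$ is big and $L^{(i)}=W_\infty+(\text{nef})$ is big. Let $\pi_i\colon X\to Y_i$ be its semiample contraction. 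Using the standard identity $\tau_*\cO_X(mW_\infty)=\bigoplus_{j=0}^m A^j$ for the $\bP^1$-bundle $X=\relProj_W(\Sym^*(\cO_W\oplus A))$ together with the projection formula, I would compute, with $B_1\coloneqq M_S$ and $B_2\coloneqq\bP^1_S$,
\[
H^0\!\big(X,mL^{(i)}\big)=\bigoplus_{j=0}^m H^0\!\big(W,A^j\otimes H_{i,W}^m\big)=\bigoplus_{j=0}^m H^0\!\big(B_i,\,p_{i,*}(A^j)\otimes H_i^m\big).
\]
Since $A$ is $p_i$-ample, the graded $\cO_{B_i}$-algebra $\cS^{(i)}\coloneqq R(W/B_i,A)[t]$ (with $\deg t=1$), whose $m$-th piece is $\bigoplus_{j=0}^m p_{i,*}(A^j)t^{m-j}$, is finitely generated; hence for $H_i$ sufficiently ample one identifies $Y_i=\relProj_{B_i}(\cS^{(i)})=C_p(W/B_i,A)$, the relative projective cone of $W$ over $B_i$ with respect to $A$. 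In particular $\pi_i$ contracts the section $W_0\cong W$ onto the vertex section $B_i$ along $p_i$, and is an isomorphism away from $W_0$.

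For part $(1)$: the fibres of $p_1$ are copies of $\bP^1$ on which $A^j$ restricts to $\cO(2j)$, so $R^{\ge 1}p_{1,*}(A^j)=0$; hence the formation of $\cS^{(1)}$ commutes with the base change $M_S\leftarrow M_s$ and $Y_{1,s}=C_p(W_s/M_s,A_s)$. The latter is normal, being the relative projective cone of the regular threefold $W_s$ over the regular surface $M_s$ with respect to the $p_{1,s}$-ample $A_s=\omega_{W_s/k}^{-1}$. By \autoref{l-eq_cond_AES_bigsa}, $H_{1,X}(W_\infty)$ then satisfies AES, and $\pi_{1,s}\colon X_s\to Y_{1,s}$ is a contraction; it is birational, being the restriction of the birational $\pi_1$ and an isomorphism away from $W_{0,s}$. (Equivalently, the vanishing $R^1p_{1,*}(A^j)=0$, together with the theorem on formal functions along the vertex section and the identity $\cO_X(-W_0)|_{W_0}\cong A$, gives $R^1\pi_{1,*}\cO_X=0$ directly.)

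For part $(2)$: I would show AES \emph{fails}. By upper semicontinuity of $h^0$ in the flat family $\varphi_W$, $h^0(W_s,A_s^j\otimes H_{2,W_s}^m)\ge h^0(W_\eta,A_\eta^j\otimes H_{2,W_\eta}^m)$ for every $j$, while the $j=0$ summand is strictly larger on the closed fibre: writing $p_{2,s}=F_{\bP^1_k/k}\circ\bar p_{2,s}$ for the Stein factorization (so $\bar p_{2,s}$ is a contraction, by \autoref{e-cartesian_central_fiber} and item $(4)$ of the lemma in \autoref{ss-special_family_fano3fold}), one gets $h^0(W_s,H_{2,W_s}^m)=h^0\big((\bP^1_k)^{(-1)},F_{\bP^1_k/k}^*H_{2,s}^m\big)>h^0(\bP^1_\eta,H_{2,\eta}^m)=h^0(W_\eta,H_{2,W_\eta}^m)$, because $F_{\bP^1_k/k}$ doubles degrees. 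Summing over $j$, $h^0(X_s,mL^{(2)}|_{X_s})>h^0(X_\eta,mL^{(2)}|_{X_\eta})$ for all $m$ divisible enough, so $H_{2,X}(W_\infty)$ does not satisfy AES, and \autoref{l-eq_cond_AES_bigsa} forces $Y_{2,s}$ to be non-normal. It then remains to identify the Stein factorization $X_s\xrightarrow{\bar\pi_{2,s}}(Y_{2,s})^\nu\xrightarrow{\nu}Y_{2,s}$ of $\pi_{2,s}$, which is birational (restriction of $\pi_2$, an isomorphism away from $W_{0,s}$) with connected fibres (base change of a contraction). Since $L^{(2)}|_{X_s}$ is the pullback under $\pi_{2,s}$ of an ample $\bQ$-class, $\bar\pi_{2,s}$ satisfies $L^{(2)}|_{X_s}=\bar\pi_{2,s}^*(\text{ample})$; so $\bar\pi_{2,s}$ is the semiample contraction of $L^{(2)}|_{X_s}$ on the regular $X_s$, whence $(Y_{2,s})^\nu$ is normal and $\nu$, finite and birational onto $Y_{2,s}$, is its normalisation — in particular not an isomorphism. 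Rerunning the section-ring computation on $X_s$ with $H_{2,W_s}=\bar p_{2,s}^*F_{\bP^1_k/k}^*H_{2,s}$ identifies $(Y_{2,s})^\nu$ with $C_p\big(W_s/(\bP^1_k)^{(-1)},A_s\big)$ and exhibits $\nu$ as the morphism covering $F_{\bP^1_k/k}\colon(\bP^1_k)^{(-1)}\to\bP^1_k$ on vertex sections. Finally $\nu$ is an isomorphism away from $\pi_{2,s}(W_{0,s})=p_{2,s}(W_s)$, the vertex section $\cong\bP^1_k$, which has codimension three in $Y_{2,s}$ — so $\nu$ is small — and over that locus $\nu$ restricts to the purely inseparable $F_{\bP^1_k/k}$, so $\nu$ is radicial, finite and surjective, hence a universal homeomorphism.

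I expect the main work to lie in part $(2)$: pinning down the Stein factorization of $\pi_{2,s}$ and checking that $\nu$ is a \emph{small universal homeomorphism} — that is, that the non-normality of $Y_{2,s}$ is concentrated along the codimension-three vertex section and is there of Frobenius type (inherited from $F_{\bP^1_k/k}$). The supporting identifications of $Y_{1,s}$ and $(Y_{2,s})^\nu$ with relative projective cones, which rest on the vanishing of the relevant higher direct images $R^{\ge 1}p_{i,*}(A^j)$ and on the normality of relative cones over regular bases, are routine but should also be spelled out with care.
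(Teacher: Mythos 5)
Your proposal is correct, but it follows a genuinely different route from the paper for much of the statement. The paper's proof is pure section counting: after the same one-line bigness/semiampleness observation, it invokes \cite{Bri_IPGMM}*{Lemma 3.5} for the equivalence between the AES statements and the asserted structure of $\pi_{i,s}$, and then computes $h^0(X_t,H^m_{i,X_t}(mW_{\infty,t}))=\sum_{j=0}^m h^0(W_t,H^m_{i,W_t}\otimes A^j_t)$ through $\gamma_{t,*}\cO_{W_t}=\cO_{Z_t}\oplus L_t^{-1}$ and K\"unneth: for $i=1$, Fujita vanishing turns every summand into an Euler characteristic, hence constant in $t$, while for $i=2$ the $j=0$ summand carries the factor $h^0(M_t,P_t)$, which jumps at $t=s$. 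Your failure-of-AES computation in (2) is this same jump repackaged via the Stein factorization $p_{2,s}=F_{\bP^1_k/k}\circ\bar p_{2,s}$, so there you agree with the paper; where you genuinely diverge is, first, in (1), where instead of Fujita vanishing you identify $Y_1$ with the relative cone $C_p(W/M_S,A)$, use that the fibres of $p_1$ are $\bP^1$'s with $A^j$ restricting to $\cO(2j)$ to get $R^1p_{1,*}A^j=0$ and base-change compatibility, and deduce normality of $Y_{1,s}$ (then \autoref{l-eq_cond_AES_bigsa} gives AES), and, second, in (2), where you prove directly the structure of the Stein factorization of $\pi_{2,s}$ --- normality of $(Y_{2,s})^\nu$, its identification with $C_p\big(W_s/(\bP^1_k)^{(-1)},A_s\big)$, smallness, and the Frobenius nature of $\nu$ over the vertex --- which the paper outsources entirely to the cited lemma. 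Your route buys self-containedness and an explicit geometric picture of $Y_{2,s}$ and of $\nu$; its cost is the list of cone-theoretic details you rightly flag as needing care: the identification $Y_i\cong C_p(W/B_i,A)$ (for $H_i$ sufficiently ample, $L^{(i)}$ is the pullback of the $S$-ample $\cO(1)\otimes q^*H_i$ on the cone, and ample models are unique), normality of relative cones over a normal base with respect to a relatively ample bundle, the chart computation $y^2=c\,uv$ with $c$ a unit showing that \emph{every} fibre of $p_1$ (over every point of $M_S$) is a $\bP^1$, and the easy but needed integrality of $Y_{2,s}$ (a Cartier divisor in the normal $Y_2$, generically isomorphic to $X_s$) so that the finite birational $\nu$ is indeed the normalization.
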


\begin{proof}
    As $W_\infty$ and the $H_{i,X}$ are semiample and the former is big, we have that the $H_{i,X}(W_\infty)$ are big and semiample. The ``equivalently'' part of items (a) and (b) follows by \cite{Bri_IPGMM}*{Lemma 3.5}. We now compute $h^0(X_t,H^m_{i,X_t}(mW_{\infty,t}))$ for all $m\geq 0$.
    \begin{enumerate}
        \item We have
        \begin{equation}
            \begin{split}
                h^0(X_t,H^m_{1,X_t}(mW_{\infty,t}))&=h^0(W_t,H^m_{1,W_t}\otimes\Sym^m(\cO_{W_t}\oplus A_t))\\
                                                   &=\sum_{j=0}^m h^0(W_t,H^m_{i,W_t}\oplus A^j_t),
            \end{split}
        \end{equation}
        hence it is enough to show that each of the summands is independendent of $t\in S$. Recall that
        \[
        \gamma_{t,*}\cO_{W_t}=\cO_{Z_t}\oplus L_t^{-1} \hspace{10mm} \textup{and} \hspace{10mm} \omega_{W_t/t}=\gamma_t^*(\omega_{Z_t/t}\otimes L_t),
        \]
        where $L=P\boxtimes \cO(1)$. Hence
        \[
        \gamma_{t,*} A_t^j=\left( (\omega^{-j}_{M_S/S}\otimes P^j)\boxtimes \omega^{-j}_{\bP^1_S/S}(-j) \right)\oplus \left( (\omega^{-j}_{M_S/S}\otimes P^{j+1})\boxtimes \omega^{-j}_{\bP^1_S/S}(-j-1) \right).
        \]

        By the projection and K\"unneth formulae we have
        \begin{equation}
            \begin{split}
                h^0(W_t, H^m_{1,W_t}\otimes A^j_t)&=h^0(Z_t,H^m_{1,Z_t}\otimes\gamma_{t,*}A^j_t)\\
                                                  &=h^0(Z_t, (\omega^{-j}_{M_t/t}\otimes P_t^j\otimes H^m_{1,t})\boxtimes \omega^{-j}_{\bP^1_t/t}(-j))+\\
                                                  &+h^0(Z_t,(\omega^{-j}_{M_t/t}\otimes P_t^{j+1}\otimes H^m_{1,t})\boxtimes \omega^{-j}_{\bP^1_t/t}(-j-1))\\
                                                  &=h^0(M_t,\omega^{-j}_{M_t/t}\otimes P_t^j\otimes H^m_{1,t})\cdot h^0(\bP^1_t,\omega^{-j}_{\bP^1_t/t}(-j))+\\
                                                  &+h^0(M_t,\omega^{-j}_{M_t/t}\otimes P_t^{j+1}\otimes H^m_{1,t})\cdot h^0(\bP^1_t,\omega^{-j}_{\bP^1_t/t}(-j-1)).
            \end{split}
        \end{equation}
    
    Looking at the last term in the equality, we see that the second factors of each summand are independent of $t\in S$ for any $j\geq 0$. But so are the first factors of each summand. Indeed, by Fujita vanishing we can pick $H_1$ positive enough so that $h^0(M_t,N_t\otimes P_t\otimes H_{1,t})=\chi(M_t,N_t\otimes P_t\otimes H_{1,t})$ for any nef line bundle $N$ on $M_S$. Hence $H_{1,X}(W_\infty)$ satisfies AES. 

    \item The same exact argument (with the obvious modifications) leads to the equality
    \begin{equation}
        \begin{split}
            h^0(W_t,H^m_{2,W_t}\otimes A_t^j)&=h^0(Z_t,H^m_{2,Z_t}\otimes\gamma_{t,*}A^j_t)\\
                                                  &=h^0(Z_t, (\omega^{-j}_{M_t/t}\otimes P_t^j)\boxtimes (\omega^{-j}_{\bP^1_t/t}(-j)\otimes H^m_{2,t}))+\\
                                                  &+h^0(Z_t,(\omega^{-j}_{M_t/t}\otimes P_t^{j+1})\boxtimes (\omega^{-j}_{\bP^1_t/t}(-j-1)\otimes H^m_{2,t}))\\
                                                  &=h^0(M_t,\omega^{-j}_{M_t/t}\otimes P_t^j)\cdot h^0(\bP^1_t,\omega^{-j}_{\bP^1_t/t}(-j)\otimes H^m_{2,t})+\\
                                                  &+h^0(M_t,\omega^{-j}_{M_t/t}\otimes P_t^{j+1})\cdot h^0(\bP^1_t,\omega^{-j}_{\bP^1_t/t}(-j-1)\otimes H^m_{2,t}).
        \end{split}
    \end{equation}
    In particular, for every $m\geq 0$, the first factor of the second summand for $j=0$ is just $h^0(M_t,P_t)$, which is \textit{not} independent of $t\in S$. We conclude by upper-semicontinuity of cohomology.
    \end{enumerate}
\end{proof}

The situation is summed up in the following commutative diagram

\begin{equation}\label{e-birl}
    \begin{tikzcd}
         & X\arrow[dl,"\pi_1",swap]\arrow[dr,"\pi_2"] & \\
        Y_1\arrow[dr,"\psi_1"]\arrow[ddr,"\varphi_{Y_1}",swap,bend right=10]\arrow[rr,"\phi",dashed] &  & Y_2\arrow[dl,"\psi_2",swap]\arrow[ddl,"\varphi_{Y_2}",bend left=10]\\
          & C\arrow[d] & \\
           & S, & 
    \end{tikzcd}
\end{equation}
where
\begin{itemize}
    \item $\pi_i$ is the semiample contraction of $H_{i,X}(W_\infty)$ and it is a divisorial contraction of relative Picard rank one for $i=1,2$;
    \item $\psi_i$ is a small contraction of relative Picard rank one for $i=1,2$;
    \item $\phi$ is a small proper birational map;
    \item $Y_{1,s}$ is normal;
    \item $Y_{2,s}$ is not $S2$.
\end{itemize}


\subsection{Excellent DVR} By ``spreading out'' Maddock's del Pezzo $\cM$, we can upgrade the statements of \autoref{t-ne_MFS}, \autoref{t-ne_cm}, \autoref{t-ne_dc}, and \autoref{t-ne_flip} to hold over a perfect DVR\footnote{See also \cite{BBF}*{Example 6.2}.}. More precisely, there exists a non-empty affine open set $U\subset\bA^3_{\bF_2}$ such that
\begin{itemize}
    \item there is a projective flat contraction $\cM\to U$, where the generic fiber is $M$ and the total space $\cM$ is smooth over $\bF_2$;
    \item there is a diagram
    \begin{equation}\label{e-MFS'}
    \begin{tikzcd}
         & \cW\arrow[d,"\widetilde{\gamma}"]\arrow[ddl,"\widetilde{p}_1",swap, bend right=10]\arrow[ddr,"\widetilde{p}_1",bend left=10] & \\
         & \cZ\arrow[dl,"\widetilde{\pr}_1"]\arrow[dr,"\widetilde{\pr}_2",swap] & \\
        \cM\otimes\bF_2\llbracket s\rrbracket\arrow[dr]\arrow[ddr,bend right=10] &  & \bP^1_U\otimes\bF_2\llbracket s\rrbracket\arrow[dl]\arrow[ddl,bend left=10]\\
         & U\otimes\bF_2\llbracket s\rrbracket\arrow[d] & \\
         & \Spec( \bF_2\llbracket s\rrbracket ),
    \end{tikzcd}
    \end{equation}
    where all the maps to $U\otimes \bF_2\llbracket s\rrbracket$ are flat and projective, the spaces $\cW$ and $\cZ$ are smooth over $\bF_2\llbracket s\rrbracket$, and base changing \autoref{e-MFS'} via $S\to U\otimes \bF_2\llbracket s\rrbracket$ (i.e. localizing $U\otimes \bF_2\llbracket s\rrbracket$ at the generic point of $U\otimes (\bF_2\llbracket s\rrbracket/(s))\cong U$) yields \autoref{e-MFS}.
    \item There is a diagram
    \begin{equation}\label{e-birl'}
        \begin{tikzcd}
            & \cX\arrow[dl,"\widetilde{\pi}_1",swap]\arrow[dr,"\widetilde{\pi}_2"] & \\
             \cY_1\arrow[dr,"\widetilde{\psi}_1",swap]\arrow[dddr,"\varphi_{\cY_1}",swap,bend right=10]\arrow[rr,"\widetilde{\phi}",dashed] &  & \cY_2\arrow[dl,"\widetilde{\psi}_2"]\arrow[dddl,"\varphi_{\cY_2}",bend left=10]\\
            & \cC\arrow[d] & \\
            & U\otimes \bF_2\llbracket s\rrbracket\arrow[d] &\\
            & \Spec( \bF_2\llbracket s\rrbracket )
        \end{tikzcd}
    \end{equation}
    where all the maps to $U\otimes \bF_2\llbracket s\rrbracket$ are flat projective contractions, the spaces $\cX,\cY_1$, and $\cY_2$ are smooth over $\bF_2\llbracket s\rrbracket$, and base changing \autoref{e-birl'} via $S\to U\otimes \bF_2\llbracket s\rrbracket$ yields \autoref{e-birl}. Similarly, there exists $\bQ$-divisors $\cD,\cG$ on $\cX$ and $\cB_i$ on $\cY_i$, such that
    \[
    (\cX,\cD)\times_{(U\otimes\bF_2\llbracket s\rrbracket)}S=(X,D)\hspace{5mm}(\cX,\cG)\times_{(U\otimes\bF_2\llbracket s\rrbracket)}S=(X,G),
    \]
    \[
    (\cY_1,\cB_1)\times_{(U\otimes\bF_2\llbracket s\rrbracket)}S=(Y_1,B_1)\hspace{5mm}(\cY_2,\cB_2)\times_{(U\otimes\bF_2\llbracket s\rrbracket)}S=(Y_2,B_2).
    \]
\end{itemize}

Then we obtain the following versions of Theorems \ref{t-ne_MFS} to \ref{t-ne_flip} over a perfect field. Their verification is straightforward and left as an exercise to the overly interested reader.

\begin{theorem}[Non-extendable Mori fiber space]\label{t-ne_MFS_pf}
    There exists a family of smooth 6-folds $\varphi_{\cW}\colon \cW\to \Spec( \bF_2\llbracket s \rrbracket )$ with a $K_{\cW}$-Mori fiber space structure $\widetilde{p}_2\colon \cW\to \bP_U^1\otimes\bF_2\llbracket s\rrbracket$ over $U\otimes\bF_2\llbracket s\rrbracket$, such that the restriction $\widetilde{p}_{2,s}$ has non-trivial Stein factorization
    \begin{center}
        \begin{tikzcd}
            \cW_s\arrow[r,"\bar{\widetilde{p}}_{2,s}"] & (\bP^1_{U})^{(-1)}\arrow[r,"F_{\bP^1_{U}/U}"] & \bP^1_U,
        \end{tikzcd}
    \end{center}
    where $\bar{\widetilde{p}}_{2,s}$ is a $K_{\cW_s}$-Mori fiber space.
\end{theorem}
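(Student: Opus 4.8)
The plan is to deduce \autoref{t-ne_MFS_pf} from the construction of diagram \eqref{e-MFS'} by a routine spreading-out argument, transferring to all of $U\otimes\bF_2\llbracket s\rrbracket$ the properties established over $S$ in \autoref{ss-special_family_fano3fold}. Recall from the construction that $\cW$ is smooth over $\bF_2\llbracket s\rrbracket$, that all structure morphisms in \eqref{e-MFS'} are flat and projective, that $\widetilde{\gamma}_*\cO_{\cW}=\cO_{\cZ}\oplus\widetilde{L}^{-1}$ for the cyclic-cover line bundle $\widetilde{L}=\widetilde{P}\boxtimes\cO(1)$ with defining section $\widetilde{\sigma}=1_{\widetilde{P}^2}\boxtimes uv$, and that base changing \eqref{e-MFS'} along $S\to U\otimes\bF_2\llbracket s\rrbracket$ recovers \eqref{e-MFS} together with everything proved about it. The statement then splits into: (i) $\varphi_{\cW}\colon\cW\to\Spec\bF_2\llbracket s\rrbracket$ is a family of smooth $6$-folds; (ii) $\widetilde{p}_2$ is a $K_{\cW}$-Mori fiber space over $U\otimes\bF_2\llbracket s\rrbracket$; (iii) the restriction $\widetilde{p}_{2,s}$ has the asserted Stein factorization. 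Each is either part of the construction data, a verbatim copy of a computation from \autoref{ss-special_family_fano3fold}, or an open condition on the base $U\otimes\bF_2\llbracket s\rrbracket$ holding at the generic point of the special fiber, so that it holds after shrinking $U$.

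For (i): the relative dimension $6$ over $\bF_2\llbracket s\rrbracket$ is a dimension count ($\dim U=3$, and $\widetilde{p}_2$ has two-dimensional fibers); smoothness over $\bF_2\llbracket s\rrbracket$ is part of the data of \eqref{e-MFS'}; flatness of $\varphi_{\cW}$ and the identity $\varphi_{\cW,*}\cO_{\cW}=\cO_{\bF_2\llbracket s\rrbracket}$ follow word for word from the proof of item (1) of the lemma in \autoref{ss-special_family_fano3fold} using $\widetilde{\gamma}_*\cO_{\cW}=\cO_{\cZ}\oplus\widetilde{L}^{-1}$; and geometric integrality of the two fibers holds after shrinking $U$ by openness of this property (\cite{EGA4}*{Th\'eor\`eme 12.2.1}), the generic fiber being geometrically integral because a further base change yields $W_\eta$. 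For (ii): pushing $\widetilde{\gamma}_*\cO_{\cW}=\cO_{\cZ}\oplus\widetilde{L}^{-1}$ forward along $\widetilde{\pr}_2$ and using $\widetilde{\pr}_{2,*}\widetilde{L}^{-1}=0$ (because $\widetilde{P}^{-1}$ is a line bundle on the integral total space with no nonzero global sections over any open, being nontrivial and numerically trivial on the generic fiber, exactly as for $\pr_{i,*}L^{-1}$ in the proof of item (2) there) gives $\widetilde{p}_{2,*}\cO_{\cW}=\cO_{\bP^1_U\otimes\bF_2\llbracket s\rrbracket}$; since $\charac(k)=2$ the degree-two cover $\widetilde{\gamma}$ is purely inseparable, hence a universal homeomorphism, so $\rho(\cW/\bP^1_U\otimes\bF_2\llbracket s\rrbracket)=\rho(\cZ/\bP^1_U\otimes\bF_2\llbracket s\rrbracket)=1$, the last equality using $\rho(M)=1$ from \autoref{t-maddock's_dP} on the $\cM$-factor (again shrinking $U$); and $-K_{\cW}$ is $\widetilde{p}_2$-ample since $\omega^{-1}_{\cW/(U\otimes\bF_2\llbracket s\rrbracket)}$ is ample over $U\otimes\bF_2\llbracket s\rrbracket$ — an open condition holding over the generic point of the special fiber by item (5) of that lemma — and $\widetilde{p}_2$ factors through $U\otimes\bF_2\llbracket s\rrbracket$. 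Together with $\dim\cW>\dim(\bP^1_U\otimes\bF_2\llbracket s\rrbracket)$ this is the desired Mori fiber space structure.

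For (iii), which carries the content: base changing \eqref{e-MFS'} along $\Spec\bF_2\hookrightarrow\Spec\bF_2\llbracket s\rrbracket$ and rerunning the computation that produced \eqref{e-cartesian_central_fiber}, the only input is that $\widetilde{\sigma}|_s$ is again a unit times $uv$ — equivalently that $\widetilde{P}^2$ is trivial along $\cM$ — which is exactly what the construction of \eqref{e-MFS'} is arranged to guarantee, since it base changes to the situation of \autoref{ss-special_family_fano3fold} in which $P$ has been made $2$-torsion. Hence the square-root cover of $\cZ_s=\cM\times_U\bP^1_U$ along $\widetilde{\sigma}|_s$ is, up to the relative Frobenius $F_{\bP^1_U/U}$, the pullback of $\cZ_s$, giving the Cartesian square with $\cW_s\to(\bP^1_U)^{(-1)}$ a contraction and $(\bP^1_U)^{(-1)}\xrightarrow{F_{\bP^1_U/U}}\bP^1_U$ finite of degree two and not an isomorphism; so the Stein factorization of $\widetilde{p}_{2,s}$ is the displayed non-trivial one. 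That $\bar{\widetilde{p}}_{2,s}$ is itself a $K_{\cW_s}$-Mori fiber space follows as in (ii) (contraction by pushforward, relative Picard rank one by pure inseparability, and $-K_{\cW_s}$ ample because $\omega^{-1}_{\cW_s/U}$ is ample over $U$ after shrinking $U$, since over the generic point of $U$ it restricts to the ample $\omega^{-1}_{W_s/k}$). The main — and essentially only — point requiring care is this compatibility of the Frobenius factorization of \eqref{e-cartesian_central_fiber} with spreading out; granting it, the remainder is base change together with the openness of smoothness, ampleness, geometric integrality, and of Picard rank one.
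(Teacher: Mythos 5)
The overall strategy is the intended one: the paper offers no proof of \autoref{t-ne_MFS_pf} beyond declaring the verification a routine consequence of the spreading-out data \eqref{e-MFS'}, and your plan — transfer each property from \autoref{ss-special_family_fano3fold} by base change and openness after shrinking $U$ — is exactly that exercise. Parts (i) and (ii) are essentially fine (with the minor caveat that openness on the affine base $U\otimes\bF_2\llbracket s\rrbracket$ around the closed fiber $U\times\{0\}$ does not automatically reach every point of the generic fiber; for the relative ampleness of $\omega^{-1}_{\cW}$ it is cleaner to argue on $\cM\to U$, where shrinking $U$ genuinely suffices, and then twist by the fiberwise numerically trivial $\widetilde{P}$). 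The genuine gap is in (iii), precisely the step you identify as the only one requiring care.

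Your stated ``only input'' — that $\widetilde{\sigma}|_s$ is a unit times $uv$, equivalently that $\widetilde{P}^2$ is trivial along $\cM$ — is not the right condition, and the justification via the $2$-torsion normalization of $P$ shows why: $P^2\cong\cO$ holds on \emph{every} fiber of the family of \autoref{ss-special_family_fano3fold}, so this condition does not distinguish $s=0$ from the other fibers, over which the Stein factorization of $p_2$ is trivial (that triviality is the whole point of the construction); hence it cannot by itself force the non-trivial factorization. The cyclic cover is determined by the pair $(\widetilde{L},\widetilde{\sigma})$, not by $\widetilde{\sigma}$ viewed in $\widetilde{L}^2$ alone: what makes the closed fiber special is that $\widetilde{L}|_{s=0}=\widetilde{P}|_{s=0}\boxtimes\cO(1)$ is pulled back from $\bP^1_U$, i.e.\ $\widetilde{P}|_{s=0}\cong\cO_{\cM}$, which comes from $s$ being the origin of $\PPic^0_{M/k}$ (so $P_s\cong\cO_M$), not from $2$-torsionness. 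If $\widetilde{P}|_{s=0}$ were a non-trivial $2$-torsion bundle on the fibers of $\cM\to U$ — which your hypothesis allows, and which is exactly the situation on the fibers over $t\neq s$ in \eqref{e-MFS} — then $\widetilde{p}_{2,s,*}\cO_{\cW_s}=\cO\oplus\bigl(\pi_*\widetilde{P}|_{s=0}^{-1}\bigr)(-1)=\cO_{\bP^1_U}$ and the Stein factorization would be trivial, contradicting the conclusion you draw. The fix is short but has to be said: $\widetilde{P}|_{s=0}$ restricts to $P_s\cong\cO_M$ on the generic fiber of $\cM\to U$, so after shrinking $U$ it is trivial on every fiber (both it and its inverse acquire sections, by semicontinuity and cohomology and base change), hence is pulled back from $U$ and, since $\operatorname{Pic}(U)=0$ for an open $U\subset\bA^3_{\bF_2}$, trivial; only then is $\widetilde{\sigma}|_{s=0}$ a unit times $uv$ as a section of the pullback of $\cO(2)$, the analogue of \eqref{e-cartesian_central_fiber} over $\bP^1_U$ holds, and the asserted non-trivial Stein factorization is established over all of $U$ rather than merely over its generic point, which is all that the base change to $S$ gives for free.
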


\begin{theorem}[Non-extendable canonical model]\label{t-ne_cm_pf}
    There exists a family of terminal 7-fold pairs $\varphi_\cX\colon (\cX,\cD)\to \Spec( \bF_2\llbracket s\rrbracket )$, with smooth fibers, with a flat and projective contraction $\cX\to U\otimes\bF_2\llbracket s\rrbracket$ of $\bF_2\llbracket s\rrbracket$-schemes, such that $K_\cX+\cD$ is big and semiample over $U\otimes\bF_2\llbracket s\rrbracket$, with relative canonical model $\widetilde{\pi}_2\colon \cX\to \cY_2$ over $U\otimes\bF_2\llbracket s\rrbracket$, whose restriction $\widetilde{\pi}_{2,s}$ has a non-trivial Stein factorization
    \begin{center}
        \begin{tikzcd}
            \cX_s\arrow[r,"\widetilde{\pi}_{2,s}^\nu"] & (\cY_{2,s})^\nu\arrow[r,"\nu"] & \cY_{2,s},
        \end{tikzcd}
    \end{center}
    where $\widetilde{\pi}_{2,s}^\nu$ is the canonical model of $K_{\cX_s}+\cD_s$ over $U$, and the demi-normalization $\nu$ is a small universal homeomorphism. In particular, the restriction map
    \begin{center}
        \begin{tikzcd}
            H^0\left(\cX,\omega_{\cX/\bF_2\llbracket s\rrbracket}^{m}(mD) \right)\arrow[r] & H^0\left(\cX_s, \omega_{\cX_s/\bF_2}^{m}(mD_s) \right)
        \end{tikzcd}
    \end{center}
    is not surjective for all $m\geq 1$ divisible enough.
\end{theorem}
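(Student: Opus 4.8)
The plan is to obtain \autoref{t-ne_cm_pf} by \emph{spreading out} Maddock's del Pezzo surface, together with the whole construction of \autoref{ss-special_family_fano3fold} and \autoref{s-james_trick}, from the function field $k=\bF_2(\alpha_1,\alpha_2,\alpha_3)=\kappa(\bA^3_{\bF_2})$ to a model over a non-empty affine open $U\subseteq\bA^3_{\bF_2}$, and then to transfer the relevant statements between the two. Since $M$ is projective over $k$ it is cut out by finitely many equations with coefficients in $k$, so clearing denominators yields a projective flat contraction $\cM\to U$ with generic fibre $\cM_\eta=M$; as $M$ and $U$ are regular, after shrinking $U$ the total space $\cM$ is regular, hence smooth over the perfect field $\bF_2$. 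Shrinking $U$ further I would arrange that the formation of $\PPic^0_{\cM/U}$ and of the Poincar\'e bundle commutes with base change, that the fibres of $\PPic^0_{\cM/U}\to U$ stay smooth and one-dimensional, and that --- after replacing the Poincar\'e bundle by a power --- it is $2$-torsion; completion of $\PPic^0_{\cM/U}$ along its zero section then produces the base $U\otimes\bF_2\llbracket s\rrbracket$ together with a line bundle that localizes, at the generic point of $U$, to the DVR $S=\Spec(k\llbracket s\rrbracket)$ and the $2$-torsion bundle $P$ of \autoref{ss-special_family_fano3fold}.

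Next I would run the construction of \autoref{ss-special_family_fano3fold} and \autoref{s-james_trick} relative to $U\otimes\bF_2\llbracket s\rrbracket$: the product $\cZ$, the line bundle $L$, the section $\sigma$, the degree-two cover $\cW=\cZ[\sqrt\sigma]$, the bundle $\cA\coloneqq\omega^{-1}_{\cW/(U\otimes\bF_2\llbracket s\rrbracket)}$, the $\bP^1$-bundle $\cX=\relProj_\cW(\Sym^*(\cO_\cW\oplus\cA))$ with its sections $\cW_0,\cW_\infty$, the projective cone $\cC$, the contractions $\widetilde\pi_i\colon\cX\to\cY_i$ and $\widetilde\psi_i$, and the birational map $\widetilde\phi$; for the boundary I would take $\cD$ to be a general element of $|a\cdot H_{2,\cX}(\cW_\infty)-K_\cX|_\bQ$ for a suitable integer $a>0$ --- the class being big and semiample, hence $\bQ$-effective --- so that $K_\cX+\cD\sim_\bQ a\cdot H_{2,\cX}(\cW_\infty)$, where $K_\cX$ denotes the canonical class relative to $\bF_2\llbracket s\rrbracket$ (equivalently, relative to $U\otimes\bF_2\llbracket s\rrbracket$, since $\omega_{U/\bF_2}$ is trivial). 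Each of these objects is defined relative to the base and so spreads out, and by generic flatness together with the openness on the base of the relevant conditions --- flatness and projectivity of the structure morphisms, smoothness over $\bF_2\llbracket s\rrbracket$ of $\cW,\cZ,\cX,\cY_i$, ampleness of $\omega^{-1}_{\cW/(U\otimes\bF_2\llbracket s\rrbracket)}$, bigness and semiampleness of $H_{i,\cX}(\cW_\infty)$ (hence of $K_\cX+\cD$), terminality of $(\cX,\cD)$, and the assertions of \autoref{p-AES_nAES_mckernantrick} about the $\widetilde\pi_i$, all of which hold over $\eta=\Spec k$ --- I would shrink $U$ so that they hold over all of $U\otimes\bF_2\llbracket s\rrbracket$. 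By construction, base change along $S\to U\otimes\bF_2\llbracket s\rrbracket$ recovers \autoref{e-birl}, and $\widetilde\pi_2\colon\cX\to\cY_2$ is the relative canonical model of $K_\cX+\cD$ over $U\otimes\bF_2\llbracket s\rrbracket$.

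It then remains to transfer the negative conclusions. Running the computation of \autoref{p-AES_nAES_mckernantrick}(b) --- via the projection and K\"unneth formulae, and, after a last shrinking of $U$, with Fujita vanishing applied uniformly --- shows that for every $m\geq1$ the function $t\mapsto h^0(\cX_t,\omega_{\cX_t/t}^m(m\cD_t))$ jumps at the origin of the formal disc $S$, the obstruction being the summand $h^0(\cM_t,P_t)$. Hence $\omega^m_{\cX/\bF_2\llbracket s\rrbracket}(m\cD)\sim_\bQ H_{2,\cX}(\cW_\infty)^{\otimes ma}$ does not satisfy AES over the DVR $\bF_2\llbracket s\rrbracket$, so the restriction map in the statement fails to be surjective for every $m\geq1$ divisible enough, and $\cY_{2,s}$ is not normal by \autoref{l-eq_cond_AES_bigsa}; in particular $\widetilde\pi_{2,s}$ has a non-trivial Stein factorization. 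To identify that factorization precisely, I would localize the entire spread-out configuration at the generic point of $U$; that localization is flat, so formation of the Stein factorization commutes with it (\cite{SP}*{Tag 03GX}), as do normality, the $S2$ property, demi-normalization, and formation of the canonical model (the relevant section algebra being finitely generated). Hence the factorization $\cX_s\to(\cY_{2,s})^\nu\to\cY_{2,s}$ restricts over $k$ to the one of \autoref{t-ne_cm}, and --- shrinking $U$ one final time so that the non-isomorphism locus of $\nu$ has codimension $\geq2$ in every fibre --- $\widetilde\pi_{2,s}^\nu$ is the canonical model of $K_{\cX_s}+\cD_s$ over $U$ while $\nu$ is a small universal homeomorphism which is not an isomorphism.

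\textbf{Main obstacle.} The only genuinely delicate step is the spreading-out of the relative Picard scheme together with the Poincar\'e bundle: one has to check that these descend to a model over $U$ in a way compatible with base change, and that completing $\PPic^0_{\cM/U}$ along its zero section reproduces the DVR $S$ and the $2$-torsion bundle $P$ of \autoref{ss-special_family_fano3fold}. Everything afterwards is a routine application of the standard spreading-out formalism (\cite{EGA4}*{\S8}, \cite{SP}) together with the openness on the base of the birational-geometric conditions above; the pathological behaviour then descends for free from \autoref{t-ne_cm}, since base change along $S\to U\otimes\bF_2\llbracket s\rrbracket$ recovers the original configuration.
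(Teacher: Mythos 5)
Your proposal is correct and takes essentially the same route as the paper: the paper's own treatment of \autoref{t-ne_cm_pf} is exactly the spreading-out of \autoref{e-MFS} and \autoref{e-birl} over $U\otimes\bF_2\llbracket s\rrbracket$ described in the excellent-DVR subsection (with the verification left as an exercise), and your argument --- shrink $U$ using openness of the relevant conditions, then transfer the failure of AES and the precise Stein factorization back from \autoref{t-ne_cm} and \autoref{p-AES_nAES_mckernantrick} via the flat base change $S\to U\otimes\bF_2\llbracket s\rrbracket$, using \autoref{l-eq_cond_AES_bigsa} and compatibility of Stein factorization with flat base change --- is precisely that verification. The only cosmetic deviation is taking $\cD$ to be a general element of $|H_{2,\cX}(\cW_\infty)-K_\cX|_{\bQ}=|H_{2,\cX}+3\cW_\infty|_{\bQ}$ rather than the spreading-out of the divisor $D=W''_\infty+\Pi_{2,X}$ from the proof of \autoref{t-ne_cm}; both yield $K_\cX+\cD\sim_{\bQ}H_{2,\cX}(\cW_\infty)$ with terminal fibers, so nothing changes.
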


\begin{theorem}[Non-extendable divisorial contraction]\label{t-ne_dc_pf}
    There exists a family of terminal 7-fold pairs $\varphi_\cX\colon (\cX,\cG)\to \Spec(\bF_2\llbracket s\rrbracket)$, with smooth fibers, satisfying condition HNCC, with a flat and projective contraction $\cX\to U\otimes\bF_2\llbracket s\rrbracket$ of $\bF_2\llbracket s\rrbracket$-schemes, and a $(K_\cX+\cG)$-negative divisorial contraction $\widetilde{\pi}_2\colon (\cX,\cG)\to (\cY_2,\cG_2\coloneqq\widetilde{\pi}_{2,*}\cG)$ over $U\otimes\bF_2\llbracket s\rrbracket$, such that $K_{\cX}+\cG$ is big over $U\otimes\bF_2\llbracket s\rrbracket$, $K_{\cY_2}+\cG_2$ is nef over $U\otimes\bF_2\llbracket s\rrbracket$, and the restriction $\widetilde{\pi}_{2,s}$ has a non-trivial Stein factorization
    \begin{center}
        \begin{tikzcd}
           \cX_s\arrow[r,"\widetilde{\pi}_{2,s}^\nu"] & (\cY_{2,s})^\nu\arrow[r,"\nu"] & \cY_{2,s},
        \end{tikzcd}
    \end{center}
    where $\widetilde{\pi}_{2,s}^\nu$ is a $(K_{\cX_s}+\cG_s)$-negative divisorial contraction over $U$ and the demi-normalization $\nu$ is a small universal homeomorphism.
\end{theorem}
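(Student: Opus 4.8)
The plan is to deduce \autoref{t-ne_dc_pf} from \autoref{t-ne_dc} by spreading out, using exactly the data assembled in the ``Excellent DVR'' subsection. Everything needed is already in place: the diagram \autoref{e-birl'} and the $\bQ$-divisors $\cG$ on $\cX$ and $\cB_i$ on $\cY_i$ have been arranged so that base change along $S\to U\otimes\bF_2\llbracket s\rrbracket$ recovers \autoref{e-birl} together with the pair $(X,G)$ and the divisorial contraction $\pi_2$ of \autoref{t-ne_dc} (and hence $G_2=\pi_{2,*}G$ on $Y_2$). Since $S$ is the completed localization of $U\otimes\bF_2\llbracket s\rrbracket$ at the generic point $\xi$ of the closed fiber $U$, it is enough to observe, for each assertion of \autoref{t-ne_dc_pf}, that it is a property of the data \autoref{e-birl'} that is open or constructible in a point of $U\otimes\bF_2\llbracket s\rrbracket$, or testable fiberwise over $U$, and that it is already known to hold after restriction to $\xi$ by \autoref{t-ne_dc} and \autoref{p-AES_nAES_mckernantrick}; one then replaces $U$ by the intersection of the finitely many resulting dense opens. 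The whole proof is the bookkeeping of checking that each claim has this form.

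Most of the claims are immediate. Smoothness of the fibers of $\varphi_\cX$ and flatness and projectivity of all maps to $U\otimes\bF_2\llbracket s\rrbracket$ are built into \autoref{e-birl'}; that $\cX\to\Spec(\bF_2\llbracket s\rrbracket)$ and $\cY_2\to\Spec(\bF_2\llbracket s\rrbracket)$ are contractions with geometrically integral fibers, and that each $\cG_i$ is flat over $\Spec(\bF_2\llbracket s\rrbracket)$, follow from the corresponding statements over $S$ by openness of these fiber properties (\cite{EGA4}*{Th\'eor\`eme 12.2.1}) and generic flatness. Terminality of $(\cX,\cG)$ is open on $U\otimes\bF_2\llbracket s\rrbracket$ and holds over $\xi$. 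That $\widetilde{\pi}_2$ is a divisorial contraction of relative Picard rank one follows from the same for $\pi_2$, the relative Picard number and the exceptional locus being a prime divisor both being constructible; writing $K_\cX+\cG\equiv_{\cY_2}c\,\Exc(\widetilde{\pi}_2)$, the inequality $c<0$ is numerical and holds over $\xi$, giving $(K_\cX+\cG)$-negativity. Relative bigness of $K_\cX+\cG$ and relative nefness of $K_{\cY_2}+\cG_2$ over $U\otimes\bF_2\llbracket s\rrbracket$ are cleanest to obtain through the ample model $\cC$: over $\cC$ the class $K_\cX+\cG$ spreads out to a relatively semiample class whose contraction factors $\widetilde{\pi}_2$, which simultaneously exhibits $K_\cX+\cG$ as big and $K_{\cY_2}+\cG_2$ as the sum of a pulled-back ample class and the $\widetilde{\pi}_2$-negative exceptional divisor, hence nef over $U\otimes\bF_2\llbracket s\rrbracket$ after shrinking.

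Condition HNCC needs a little more care. After a suitable shrinking one arranges that whatever finitely many pieces of data underlie the verification of HNCC for $(X,G)$ over $S$ — a log resolution of $(\cX,\cG+\cX_s)$, the list of non-canonical centers, their log discrepancies, and the diminished base locus $\bB_-(K_\cX+\cG)$ — have formation commuting with base change to fibers over $U$; for $\bB_-$ this uses that $K_\cX+\cG$ is relatively big and that the relevant asymptotic base loci stabilize generically. Then the inclusion of a non-canonical center of $(\cX,\cG+\cX_s)$ in $\bB_-(K_\cX+\cG)$, together with its $\varphi_\cX$-horizontality — a condition on the fiber over a point of $U\otimes\bF_2\llbracket s\rrbracket$ — transports from $\xi$, where it holds by \autoref{t-ne_dc}, to every point of $U$.

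Finally, the central-fiber pathology. Forming the Stein factorization of $\widetilde{\pi}_{2,s}$ and the demi-normalization commutes with flat base change (\cite{SP}*{Tag 03GX}), so over a dense open of $U$ one obtains $\bar{\widetilde{\pi}}_{2,s}\colon\cX_s\to(\cY_{2,s})^\nu$ restricting over $\xi$ to the Stein factorization and demi-normalization of $\pi_{2,s}$; that $\nu$ is finite and purely inseparable (hence a universal homeomorphism), small, and not an isomorphism, and that $\widetilde{\pi}_{2,s}^\nu$ is a $(K_{\cX_s}+\cG_s)$-negative divisorial contraction over $U$, follow from the corresponding facts over $\xi$ — precisely \autoref{p-AES_nAES_mckernantrick}(2) together with \autoref{l-eq_cond_AES_bigsa} — since ``purely inseparable'', ``isomorphism in codimension one'', ``not an isomorphism'', and the relevant discrepancy inequality are all constructible or fiberwise. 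I expect the main obstacle to be exactly the two points in the previous paragraph: phrasing relative nefness of $K_{\cY_2}+\cG_2$ via the ample model $\cC$, and condition HNCC via the generic constancy over $U$ of log discrepancies and of $\bB_-$, since in positive characteristic neither relative nefness nor the shape of the diminished base locus is automatically stable under base change. With those two points set up correctly, the remainder is a routine finite shrinking — which is presumably why \autoref{t-ne_dc_pf} is left to the reader.
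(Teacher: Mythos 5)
Your overall route — deduce \autoref{t-ne_dc_pf} from \autoref{t-ne_dc} using the spread-out data of the ``Excellent DVR'' subsection — is indeed what the paper intends (it gives no argument beyond that setup), but two of your steps fail as written. First, the nefness mechanism ``via the ample model $\cC$'' is incorrect: $K_\cX+\cG$ is not relatively semiample, nor even nef, over $\cC$, since the curves contracted by $\cX\to\cC$ are exactly the curves in $\cW_0$ and $(K_\cX+\cG)|_{\cW_0}\equiv \cH_{2,\cW}-\epsilon\cH_{1,\cW}$ is negative on $\widetilde{p}_2$-vertical curves; moreover the proposed decomposition of $K_{\cY_2}+\cG_2$ as ``pulled-back ample plus the $\widetilde{\pi}_2$-negative exceptional divisor'' does not parse, because $\widetilde{\pi}_2$ contracts that divisor, which is therefore not a divisor on $\cY_2$ (also, with your normalization $K_\cX+\cG\equiv_{\cY_2}c\,\Exc(\widetilde{\pi}_2)$, negativity corresponds to $c>0$, not $c<0$). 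Second, and more structurally, the engine ``the property is open/constructible, holds at $\xi$, hence holds after shrinking $U$'' cannot deliver the assertions made over all of $U\otimes\bF_2\llbracket s\rrbracket$: shrinking $U$ only removes closed subsets of the form $Z\otimes\bF_2\llbracket s\rrbracket$ with $Z\subsetneq U$, whereas a closed bad locus avoiding both $\xi$ and the generic point (e.g.\ one dominating $U$ like $V(\alpha_1-s)$) survives every such shrink; in addition, relative nefness and the shape of $\bB_-$ (hence HNCC) are not open or constructible conditions in the first place, as you yourself half-acknowledge. The transport scheme is sound only for the statements about the special fiber itself (the Stein factorization of $\widetilde{\pi}_{2,s}$, $\nu$ being small, purely inseparable and non-trivial, $\widetilde{\pi}^\nu_{2,s}$ being a negative divisorial contraction over $U$), since there everything lives over $U$ and shrinking $U$ genuinely removes the bad loci; that part of your argument is fine.

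The intended ``straightforward verification'' is not a transport of conclusions from $\xi$ but a rerun of the proofs of \autoref{t-ne_cm} and \autoref{t-ne_dc} verbatim over $U\otimes\bF_2\llbracket s\rrbracket$ with the spread-out data: choose $\cG$ exactly as there, using general members of the spread-out relatively ample/semiample systems, so that $K_\cX+\cG\sim_{\bQ}\widetilde{\pi}_2^*\cA_2-\epsilon\cH_{1,\cX}$ with $\cA_2$ ample over $U\otimes\bF_2\llbracket s\rrbracket$. Then $-(K_\cX+\cG)\equiv_{\cY_2}\epsilon\cH_{1,\cX}$ is $\widetilde{\pi}_2$-ample because $\cH_{1,\cX}$ is ample on the positive-dimensional $\widetilde{\pi}_2$-fibers (which map finitely to $\cM$), giving negativity over the whole base; $K_{\cY_2}+\cG_2\sim_{\bQ}\cA_2-\epsilon\,\widetilde{\pi}_{2,*}\cH_{1,\cX}$ is then relatively ample, hence nef, for $0<\epsilon\ll 1$, by the same descent and openness-of-amplitude argument already implicit in the proof of \autoref{t-ne_dc}; bigness over $U\otimes\bF_2\llbracket s\rrbracket$ is read off the generic fiber; and HNCC is the same two-line argument as over $S$: terminality of $(\cX,\cG)$ forces every non-canonical center of $(\cX,\cG+\cX_s)$ into $\cX_s$, and $\cX_s\not\subset\bB_-(K_\cX+\cG)$ because $\cO_\cX(\cX_s)\cong\cO_\cX$ and any section vanishing along $\cX_s$ is divisible by $s$. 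No spreading out of log resolutions or of $\bB_-$ from $S$, and no constructibility bookkeeping, is needed for these total-space statements.
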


\begin{theorem}[Non-extendable flip]\label{t-ne_flip_pf}
    There exists a family of 7-fold pairs $\varphi_{\cY_1}\colon (\cY_1,\cB_1)\to \Spec( \bF_2\llbracket s\rrbracket )$, with klt fibers, satisfying condition HNCC, with a flat and projective contraction $\cY_1\to U\otimes\bF_2\llbracket s\rrbracket$ of $\bF_2\llbracket s\rrbracket$-schemes, and a $(K_{\cY_1}+\cB_1)$-flip over $U\otimes\bF_2\llbracket s\rrbracket$
    \begin{center}
        \begin{tikzcd}
            (\cY_1,\cB_1)\arrow[rr,"\widetilde{\phi}",dashed]\arrow[dr] &  & (\cY_2,\cB_2)\arrow[dl]\\
                           & \cC & 
        \end{tikzcd}
    \end{center}
    such that $K_{\cY_1}+\cB_1$ is big over $U\otimes\bF_2\llbracket s\rrbracket$, $K_{\cY_2}+\cB_2$ is nef over $U\otimes\bF_2\llbracket s\rrbracket$, and $\widetilde{\phi}_s$ factors as
    \begin{center}
        \begin{tikzcd}
            \cY_{1,s}\arrow[r,"\widetilde{\phi}_s^\nu",dashed] & (\cY_{2,s})^\nu\arrow[r,"\nu"] & \cY_{2,s},
        \end{tikzcd}
    \end{center}
    where $\widetilde{\phi}_s^\nu$ is a $(K_{\cY_{1,s}}+\cB_{1,s})$-flip over $U$, and the demi-normalization $\nu$ is a small non-trivial universal homeomorphism.
\end{theorem}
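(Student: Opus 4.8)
The plan is to deduce \autoref{t-ne_flip_pf} from \autoref{t-ne_flip} via the spreading-out discussion preceding the statement. One starts from the diagram \eqref{e-birl'} together with the boundaries $\cB_1,\cB_2$, whose base change along the flat morphism $S\to U\otimes\bF_2\llbracket s\rrbracket$ recovers the diagram \eqref{e-birl} and the flip $\phi$ of \autoref{t-ne_flip}; the task is then to propagate every assertion of \autoref{t-ne_flip} from the local ring of $U\otimes\bF_2\llbracket s\rrbracket$ at the generic point of $U$ to a dense open subset, shrinking $U$ finitely many times.

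First I would shrink $U$ so that the geometric content of \autoref{t-ne_flip} already holds over $U\otimes\bF_2\llbracket s\rrbracket$: that $\cY_1,\cY_2$ are smooth over $\bF_2\llbracket s\rrbracket$ with klt fibers; that $\widetilde\psi_1,\widetilde\psi_2$ are small contractions of relative Picard rank one over $\cC$ and $\widetilde\phi$ is a small birational map; that $K_{\cY_1}+\cB_1$ is big over $U\otimes\bF_2\llbracket s\rrbracket$ and anti-ample over $\cC$, while $K_{\cY_2}+\cB_2$ is nef over $U\otimes\bF_2\llbracket s\rrbracket$ and ample over $\cC$, so that $\widetilde\phi$ is the $(K_{\cY_1}+\cB_1)$-flip over $\cC$; and that condition HNCC holds for $(\cY_1,\cB_1)\to\Spec\bF_2\llbracket s\rrbracket$. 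Each of these is a constructible --- indeed open --- condition on the point of $U$: smoothness and klt-ness are tested on a fixed log resolution which itself spreads out; relative bigness, nefness, and ampleness are open over a projective morphism; being small and being a flip involve finitely many relative Picard ranks and intersection numbers; and HNCC concerns finitely many divisors over a fixed resolution together with $\bB_-(K_{\cY_1}+\cB_1)$, which stabilizes. By \autoref{t-ne_flip} each holds over the generic point of $U$, hence over a dense open; one intersects and renames $U$.

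The core of the argument is the factorization of $\widetilde\phi_s$. Writing $\widetilde\phi=\widetilde\pi_2\circ\widetilde\pi_1^{-1}$ with $\widetilde\pi_2\colon\cX\to\cY_2$ the semiample contraction of the pullback of $H_{2,\cX}(\cW_\infty)$, I would first show that $\cY_{2,s}$ is not normal. The portion of the proof of \autoref{l-eq_cond_AES_bigsa} showing that, for a projective contraction onto a scheme flat over a DVR, normality of the closed fiber is equivalent to torsion-freeness of the first higher pushforward of the structure sheaf applies verbatim with $\bF_2\llbracket s\rrbracket$ in place of $S$, since it uses only the pushed-forward exact sequence and Zariski's Main Theorem. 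Thus it suffices to show that $R^1\widetilde\pi_{2,*}\cO_\cX$ has $s$-torsion. By flat base change, the restriction of this coherent sheaf to the preimage of the generic point of $U$ is $R^1\pi_{2,*}\cO_X$, which does have $s$-torsion: by \autoref{l-eq_cond_AES_bigsa} this is equivalent to $H_{2,X}(W_\infty)$ failing AES, which is \autoref{p-AES_nAES_mckernantrick}(b) --- ultimately the failure of $t\mapsto h^0(M_t,P_t)$ to be constant, itself forced by the Frobenius structure \eqref{e-cartesian_central_fiber} --- and equivalently to $Y_{2,s}$ not being $S2$ (the list following \eqref{e-birl}). Since localization is exact, the coherent sheaf $\ker(\cdot s)$ on $\cY_2$ has nonzero stalk along that preimage, hence is nonzero, and $\cY_{2,s}$ is not normal. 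I would then take the Stein factorization $\cX_s\xrightarrow{\bar{\widetilde\pi}_{2,s}}(\cY_{2,s})^\nu\xrightarrow{\nu}\cY_{2,s}$; by compatibility of Stein factorization with the flat base change to the generic point of $U$ (\cite{SP}*{Tag 03GX}) it restricts to the factorization of \autoref{p-AES_nAES_mckernantrick}(b), so that $\nu$ is finite, and over a dense open of $U$ it is a non-trivial small universal homeomorphism: the non-isomorphism locus of $\nu$, the locus where $\nu$ fails to be universally injective, and the non-$S2$ locus of $\cY_{2,s}$ are all closed, each is the fiber over a constructible subset of $U$, and over the generic point of $U$ they have the required behavior. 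Finally, since $\cY_{1,s}$ is normal and $\cX_s\to(\cY_{2,s})^\nu$ is the relevant relative canonical model over $U$, the induced maps $\cY_{1,s}\to\cC_s$ and $(\cY_{2,s})^\nu\to\cC_s$ are small of relative Picard rank one over $U$, with $K_{\cY_{1,s}}+\cB_{1,s}$ anti-ample and $K_{(\cY_{2,s})^\nu}+\nu^*\cB_{2,s}$ ample over $\cC_s$; hence $\widetilde\phi^\nu_s$ is the asserted $(K_{\cY_{1,s}}+\cB_{1,s})$-flip over $U$.

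I expect the main difficulty to be bookkeeping: arranging for a single open $U$ to witness all of the above simultaneously. The one spot that needs slightly more than plain upper semicontinuity is the persistence of the failure of AES, because the schemes in play are not proper over $\bF_2\llbracket s\rrbracket$; there one must work with the coherent sheaf $R^1\widetilde\pi_{2,*}\cO_\cX$ and invoke exactness of localization rather than semicontinuity of $h^0$. Everything else --- smallness and non-triviality of $\nu$, the signs of the log canonical classes, condition HNCC --- reduces to standard constructibility over $U$.
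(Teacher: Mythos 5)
Your proposal follows exactly the route the paper intends: Theorem \autoref{t-ne_flip_pf} is stated after the spreading-out discussion of \autoref{e-birl'} and its verification is explicitly left as an exercise, the intended argument being precisely what you carry out --- transfer \autoref{t-ne_flip} along the flat localization $S\to U\otimes\bF_2\llbracket s\rrbracket$ and shrink $U$ finitely many times. Your core step is also the right one: detecting non-normality of $\cY_{2,s}$ through $s$-torsion in $R^1\widetilde{\pi}_{2,*}\cO_{\cX}$ (whose presence is inherited from $R^1\pi_{2,*}\cO_X$ by flat base change and exactness of localization, using \autoref{l-eq_cond_AES_bigsa} and \autoref{p-AES_nAES_mckernantrick}(b)), and then invoking compatibility of Stein factorization with flat base change to get the non-trivial small universal homeomorphism $\nu$ after a further shrinking.

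One justification in your first shrinking step is wrong as a general principle: nefness is \emph{not} an open condition on the base of a projective family (unlike relative ampleness), and over a countable base such as $U\subset\bA^3_{\bF_2}$ one cannot even fall back on a ``very general'' argument. The conclusion you need is nevertheless true, but it should be obtained from the construction rather than from openness: as in the proofs of \autoref{t-ne_cm} and \autoref{t-ne_flip}, $K_{\cY_2}+\cB_2$ is $\bQ$-linearly equivalent to a positive combination of the pullback of a divisor ample over $U\otimes\bF_2\llbracket s\rrbracket$ on $\cC$ and a divisor ample over $\cC$ (equivalently, over $U\otimes\bF_2\llbracket s\rrbracket$ after adding that pullback), and relative ampleness of these spread-out divisors does persist after shrinking $U$; nefness then follows for free. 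A similar remark applies to your treatment of HNCC: rather than appealing to a vaguely ``stabilizing'' $\bB_-$, one should redo the identification of the non-canonical centers of $(\cY_1,\cB_1+\cY_{1,s})$ and of $\bB_-(K_{\cY_1}+\cB_1)$ exactly as in the proof of \autoref{t-ne_flip}, noting that any extra centers appearing only over proper closed subsets of $U$ are removed by shrinking. With these two justifications repaired, your argument is the intended verification.
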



\section{Proofs}

\begin{proof}[Proof of \autoref{t-ne_MFS}]
    Just consider the family of Fano 3-folds $\varphi_W\colon W\to S$ together with the fibration $p_2\colon W\to\bP^1_S$ over $S$ constructed in \autoref{ss-special_family_fano3fold}. 
\end{proof}

\begin{proof}[Proof of \autoref{t-ne_cm}]
    We consider the contraction of $S$-schemes $\pi_2\colon X\to Y_2$ constructed in \autoref{s-james_trick}. The point is realizing it as the semiample contraction of $K_X+D$ for some effective $\bQ$-divisor $D$ such that $(X,D)$ and $(X_t,D_t)$ are terminal for all $t\in S$. Let $\Lambda\in |-K_{W/S}|_{\bQ}$ and $\Pi_2\in |H_2|_{\bQ}$ be a sufficiently general divisors so that $(W,\Lambda+\Pi_2)$ and $(W_t,\Lambda_t+\Pi_{2,t})$ are terminal for all $t\in S$. Let $W_\infty'\in |2W_\infty|_{\bQ}$ be a sufficiently general $\bQ$-divisor. Then
    \begin{equation}
        \begin{split}
            K_X+\Lambda_X+W_0+W_\infty'+\Pi_{2,X}\sim_{\bQ}H_{2,X}(W_\infty).
        \end{split}
    \end{equation}
   To get rid of the plt center $W_0$, observe that $W_0\sim_{\bQ}W_\infty-A_X$, thus
   \begin{equation}
        \begin{split}
            K_X+\Lambda_X+W_0+W_\infty'+\Pi_{2,X}&\sim_{\bQ}K_X+\tau^*(-K_{W/S})+W_\infty+\tau^*K_{W/S}+W_\infty'+\Pi_{2,X}\\
                                              &\sim_{\bQ}K_X+W_\infty''+\Pi_{2,X},
        \end{split}
    \end{equation}
    where $W_{\infty}''\in |3W_\infty|_{\bQ}$ can be taken general enough so that $(X,D\coloneqq W_\infty''+\Pi_{2,X})$ is still terminal, and the same holds for $(X_t,D_t= W_{\infty,t}''+\Pi_{2,X_t})$ for all $t\in S$.
\end{proof}

\begin{proof}[Proof of \autoref{t-ne_dc}]
     By \autoref{t-ne_cm} and its proof, the contraction of $S$-schemes $\pi_2\colon X\to Y_2$ is the canonical model of $K_X+D$. In particular, it is a $(K_X+D)$-trivial contraction. We now want to replace $D$ with some $G$ so that $(X,G)$ and $(X_t,G_t)$ are still terminal and of general type for all $t\in S$, but $\pi_2$ is now a $(K_X+G)$-negative divisorial contraction. Let $\Pi_1\in |H_1|_{\bQ}$ be a general element, fix $0<\delta<1$ and let $0<\epsilon\ll 1$ such that $3\delta A-\epsilon \Pi_{1,W}$ is ample, and $(X,D+3\delta W_0)$ and $(X_t,D_t+3\delta W_{0,t})$ is terminal for all $t\in S$. Let $E\in |3\delta A-\epsilon \Pi_{1,W}|_{\bQ}$ be a sufficiently general element. Then
     \begin{equation}
         \begin{split}
             D-\epsilon\Pi_{1,X}&\sim_{\bQ} W_\infty''+\Pi_{2,X}-\epsilon\Pi_{1,X}\\
                                     &\sim_{\bQ}(1-\delta)W_\infty''+\Pi_{2,X}+\delta W_\infty''-\epsilon\Pi_{1,X}\\
                                     &\sim_{\bQ}(1-\delta)W_\infty''+\Pi_{2,X}+3\delta W_0+3\delta A_X-\epsilon \Pi_{1,X}\\
                                     &\sim_{\bQ}(1-\delta)W_\infty''+\Pi_{2,X}+3\delta W_0+E_X.
         \end{split}
     \end{equation}
     Let $G\coloneqq (1-\delta)W_\infty''+\Pi_{2,X}+3\delta W_0+E_X$: by construction $(X,G)$ and $(X_t,G_t)$ are terminal and of general type for all $t\in S$, the morphism $\pi_2$ is a $(K_X+G)$-negative divisorial contraction, and $K_{Y_2}+G_2$ is nef. Note also that condition HNCC is satisfied: indeed, the only non-canonical center of $(X,G+X_s)$ is $X_s$ itself, and it is clearly not contained in $\bB_-(K_X+G)=W_0$.
\end{proof}

\begin{proof}[Proof of \autoref{t-ne_flip}]
    By the proof of \autoref{t-ne_cm} we have
    \[
    K_{X}+W_\infty''\sim_{\bQ}W_\infty,
    \]
    where $W_\infty''\in |3W_\infty|_{\bQ}$ is a general element. We can find $0<\varepsilon_0,\varepsilon_2\ll 1$ such that, letting $\Theta_2\in |\varepsilon_2H_{2,X}|_{\bQ}$ be a general element, the $\bQ$-divisor
    \[
    B\coloneqq W_\infty''+\varepsilon_0 W_0+\Theta_2
    \]
    makes $(X,B)$ and $(X_t,B_t)$ terminal for all $t\in S$. Let $B_i\coloneqq \pi_{i,*}B$, and note that $(Y_1,B_1)$ and $(Y_{1,t},B_{1,t})$ are klt for all $t\in S$. Furthermore, by construction, we have that $\psi_1$ and $\psi_2$ are small birational contractions over $S$ of relative Picard rank one, which are $(K_{Y_1}+B_1)$-negative and $(K_{Y_2}+B_2)$-positive, respectively. In particular, the induced map
    \begin{equation}
        \begin{tikzcd}
            (Y_1,B_1)\arrow[dr,"\psi_1",swap]\arrow[rr,"\phi",dashed] &  & (Y_2,B_2)\arrow[dl,"\psi_2"]\\
                           & C & 
        \end{tikzcd}    
    \end{equation}
    is a $(K_{Y_1}+B_1)$-flip over $S$. By \autoref{p-AES_nAES_mckernantrick} we have the following factorization over the closed point of $S$
    \begin{equation}
        \begin{tikzcd}
                                                                                         & ((Y_{2,s})^\nu,(B_{2,s})^\nu)\arrow[dr,"\nu"] &\\ 
            (Y_{1,s},B_{1,s})\arrow[dr,"\psi_{1,s}",swap]\arrow[rr,"\phi_s",dashed]\arrow[ur,"\phi_s^\nu",dashed] &  & (Y_{2,s},B_{2,s})\arrow[dl,"\psi_{2,s}"]\\
                           & C_s, & 
        \end{tikzcd}    
    \end{equation}
    where $\phi_s^\nu$ is a $(K_{Y_{1,s}}+B_{1,s})$-flip, and the demi-normalization $\nu$ is a non-trivial small universal homeomorphism. Note that the only non-canonical centers of $(Y_1,B_1+Y_{1,s})$ are $Y_{1,s}$ and $\pi_1(W_0)$, and only the latter is contained in $\bB_-(K_{Y_1}+B_1)$. As $W_0$ is $\varphi_X$-horizontal, condition HNCC is satisfied. Lastly, note that $K_{Y_2}+B_2$ is nef by construction.
\end{proof}

\begin{proof}[Proof of \autoref{t-failure_AIP_gt}]
    We use the same notation as \autoref{e-MFS} and \autoref{e-birl}. Let $W^0\coloneqq p_2^{-1}(\bA^1_S)\subset W$, let $X^0\coloneqq \tau^{-1}(W^0)\subset X$, and let $Y\coloneqq \pi_2(X^0)\subset Y_2$. This way we have a birational contraction of quasi-projective $S$-schemes
    \begin{center}
        \begin{tikzcd}
            X^0\arrow[r,"\pi"] & Y
        \end{tikzcd}
    \end{center}
    such that $K_{X^0}+W''_\infty|_{X^0}\sim_{Y,\bQ}0$ and $Y_s$ is not $S2$. Note that we also have $K_{X^0}+(l-1)W_\infty|_{X^0}\sim_{Y,\bQ}0$ for all $l\geq 4$. In particular, the restriction map
    \begin{center}
        \begin{tikzcd}
            H^0\left(X^0,\omega_{X^0/S}^m(m(l-1)W_\infty|_{X^0})\right)\arrow[r] & H^0\left(X^0,\omega_{X^0_s/s}^m(m(l-1)W_{\infty}|_{X^0_s})\right)
        \end{tikzcd}
    \end{center}
    fails to be surjective for all $m\geq 1$ divisible enough, by \autoref{l-eq_cond_AES_bigsa}.
    Letting $l$ be odd, a standard $l$-cyclic cover branched over $\Sigma|_{X^0}$, where $\Sigma\in |lW_\infty|$ is a regular divisor, yields a cover $\mu\colon V\to X^0$ such that $V$ is also regular and $\omega_{V/S}\cong\mu^*\omega_{X^0/S}((l-1)W_\infty|_{X^0})$. As $l$ is odd, the line bundle $\omega^m_{X^0/S}(m(l-1)W_\infty|_{X^0})$ is a direct summand of $\mu_*\omega^m_{V/S}$ for all $m$. As a consequence the restriction map
    \begin{center}
        \begin{tikzcd}
            H^0\left(V,\omega_{V/S}^m\right)\arrow[r] & H^0\left(V_s,\omega_{V_s/s}^m\right)
        \end{tikzcd}
    \end{center}
    also fails to be surjective for all $m\geq 1$ divisible enough. Equivalently, if we consider the Stein factorization of $\zeta\coloneqq\pi\circ \mu$
    \begin{center}
        \begin{tikzcd}
            V\arrow[r,"\bar{\zeta}"] & \bar{Y}\arrow[r] & Y,
        \end{tikzcd}
    \end{center}
    we have that $\bar{Y}_s$ is not $S2$, again by \autoref{l-eq_cond_AES_bigsa}. Note that $\bar{\zeta}$ is still a birational contraction of $S$-schemes, and that we have $K_V\sim_{\bar{Y},\bQ}0$. By ``spreading out'' the above morphism over $U\otimes\bF_2\llbracket s \rrbracket$ we then obtain a birational contraction of flat $\bF_2\llbracket s\rrbracket$-schemes
    \begin{center}
        \begin{tikzcd}
            \cV\arrow[r,"\widetilde{\bar{\zeta}}"] & \bar{\cY}
        \end{tikzcd}
    \end{center}
    satisfying the requirements of the theorem. 
\end{proof}

\bibliographystyle{alpha}
\bibliography{bib.bib}

\end{document}